\newtheorem{lema}{Lemma}[section]
\newtheorem{teor}{Theorem}[section]
\newtheorem{corol}{Corollary}[section]
\newtheorem{prop}{Proposition}[section]
\newtheorem{rem}{\it Remark}[section]
\newcommand{\nor}[2]{{\left\|{#1}\right\|_{#2}}}
\newcommand{\nora}[3]{{\left\|{#1}\right\|_{#2}^{#3}}}
\title{\textsc{Well-posedness for a Family of \\ Perturbations of the KdV Equation in \\ Periodic Sobolev Spaces of Negative Order}}
\author{Xavier Carvajal Paredes\thanks{Xavier Carvajal. Instituto de Matem\'aticas, Universidade Federal do Rio de Janeiro, Rio de Janeiro. E-mail:  {\tt carvajal@im.ufrj.br}} \\ Ricardo A. Pastran  \thanks{Ricardo Pastr\'an. Instituto de Matem\'aticas, Universidade Federal do Rio de Janeiro, Rio de Janeiro. On leave from Universidad Nacional de Colombia, Bogot\'a.
E-mail: {\tt rapastranr@unal.edu.co}}}
\begin{document}
\maketitle
\begin{abstract}
We establish local well-posedness in Sobolev spaces $H^s(\mathbb{T})$, with $s\geq -1/2$, for the initial value problem issues of the equation
$$ u_t + u_{xxx}+\eta Lu + uu_x=0;\; x\in \mathbb{T},\; t\geq0, $$
where $\eta >0$, $(Lu)^{\wedge }(k)=-\Phi(k)\widehat{u}(k)$, $k\in \mathbb{Z}$ and $\Phi \in \mathbb{R}$ is bounded above. Particular cases of this problem are the Korteweg-de Vries-Burgers equation for $\Phi(k)=-k^2$, the derivative Korteweg-de Vries-Kuramoto-Sivashinsky equation for $\Phi(k)=k^2-k^4$, and the Ostrovsky-Stepanyams-Tsimring equation for $\Phi(k)=|k|-|k|^3$.
\end{abstract}
\vspace{0.5cm}
\textit{Keywords:} Cauchy Problem, Local Well-Posedness, KdV equation.

\setcounter{equation}{0}
\setcounter{section}{0}
\section{\sc Introduction}
We consider the $\lambda$-periodic Cauchy problem for
\begin{equation}\label{fivp}
\left\{
\begin{aligned}
u_t+u_{xxx}+\eta Lu + uu_x &=0, \qquad x\in [0,\lambda], \;\; t\in [0,+\infty), \\
u(x,0)&=u_0(x),
\end{aligned}
\right.
\end{equation}
where $\eta >0$ is a constant, the linear operator $L$ is defined via the Fourier transform by
\begin{align}
(Lu)^{\wedge}(k)=-\Phi(k)\widehat{u}(k),\qquad \text{where}\quad k\in \mathbb{Z}/\lambda , \label{operadorl}
\end{align}
and the Fourier symbol $\Phi(k)$
is a real valued function which is bounded above; i.e., there is a constant $\alpha $ such that $\Phi(k)\leq \alpha $. We take $\alpha \geq 1$ without lost of generality.
\\ \\
Before stating the main result of this work we give some important examples that belong to the model considered in (\ref{fivp}), where $u=u(x,t)$ is a real-valued function and $\eta >0$ is a constant.
The first example is the Korteweg-de Vries-Burgers equation
\begin{equation}\label{KdVBur}
\left\{
\begin{aligned}
u_t+u_{xxx}-\eta u_{xx}+uu_x&=0, \;\;\;  t\geq 0,    \\
u(x,0)&=u_0(x).
\end{aligned}
\right.
\end{equation}
Molinet and Ribaud considered the initial value problem (\ref{KdVBur}) in \cite{Mol} and proved that it is globally well-posed for given data in $H^s(\mathbb{R})$, $s>-1$, and ill-posed in $H^s(\mathbb{R})$ for $s<-1$ in the sense that one cannot solve the Cauchy problem for (\ref{KdVBur}) by a Picard iterative method implemented on the integral formulation. They show that these results are also valid in the periodic setting. These results are surprising  because the index $s=-1$ is lower than the exponents $s=-3/4$ and $s=-1/2$ which are boundaries indexes that determine the Sobolev spaces where it is possible to obtain well-posedness results using a Picard iterative method implemented on the integral formulation for the KdV equation on $\mathbb{R}$ and $\mathbb{T}$, respectively. 
This was the first almost sharp result to a dispersive-dissipative equation using the Fourier restriction norm method or Bourgain method. It is not known what happen when $s=-1$.
\\ \\
Other model that fits in the family (\ref{fivp}) is the derivative Korteweg-de Vries-Kuramoto Sivashinsky equation
\begin{equation}\label{KdVKS}
\left\{
\begin{aligned}
u_t+u_{xxx} + \eta (u_{xx}+u_{xxxx}) + uu_x &=0, \;\;\;\;t\geq 0, \\
u(x,0)&=u_0(x),
\end{aligned}
\right.
\end{equation}
This equation arises as a model for long waves in a viscous fluid flowing down an inclined plane and also describes drift waves in a plasma (cf. \cite{Cohen, Topper}). The equation (\ref{KdVKS}) is a particular case of Benney-Lin equation \cite{Benney, Topper}; i.e.,
\begin{equation}\label{blin}
\left\{
\begin{aligned}
u_t+u_{xxx}+\eta (u_{xx} + u_{xxxx})+ \beta u_{xxxxx} + uu_x&=0, \;\;\;x\in \mathbb{R}, \;\;\; t\geq 0,   \\
u(x,0)&=u_0(x),
\end{aligned}
\right.
\end{equation}
when $\beta=0$. The initial value problem associated to (\ref{KdVKS}) was studied by Biagioni, Bona, Iorio and Scialom in \cite{BBIS}. They also determined the limiting behavior of solutions as the dissipation tends to zero. Biagioni and Linares proved global well-posedness for the initial value problem (\ref{blin}) for initial data in $L^2(\mathbb{R})$ in \cite{bl}. The Benney-Lin equation was studied by Chen and Li in \cite{chenli} using the Fourier restriction norm method too. They proved that (\ref{blin}) is globally well posed in the Sobolev spaces $H^s(\mathbb{R})$ for $0\geq s >-2$ and ill-posed in $H^s(\mathbb{R})$ for $s<-2$ in the sense that one cannot solve the Cauchy problem for (\ref{blin}) by a Picard iterative method implemented on the integral formulation.
\\ \\
Another example of this type is the Ostrovsky-Stepanyams-Tsimring (OST) equation:
\begin{equation}\label{ost}
\left\{
\begin{aligned}
u_t+u_{xxx}-\eta (\mathcal{H}u_x + \mathcal{H}u_{xxx}) + u^pu_x&=0, \qquad t\geq 0, \;\;\; p=1  \\
u(x,0)&=u_0(x),
\end{aligned}
\right.
\end{equation}
where $\mathcal{H}$ denotes the Hilbert transform:
\begin{equation}
\mathcal{H} f(x)=-\dfrac{1}{\pi}v.p.\dfrac{1}{x}\ast f=\dfrac{1}{\pi}\lim_{\epsilon\to 0}\int_{|x|>\epsilon}\dfrac{f(y)}{x-y}\,dy \, \label{Hilbert}
\end{equation}
The equation (\ref{ost}), with $p=1$, was derived by Ostrovsky-et al. in \cite{OST} to describe the radiational instability of long waves in a stratified shear flow. The earlier well-posedness results for (\ref{ost}), with $p=1$, can be found in \cite{Borys}, for given data in $H^s(\mathbb{R})$, local result when $s>1/2$ and global result for $s\geq 1$. Carvajal and Scialom in \cite{CS} considered the initial value problem (\ref{ost}) in the real case and proved the local well-posedness results for given data in $H^s(\mathbb{R})$, $s\geq 0$ when $p=1,2,3$. They also obtained the global well-posedness results for data in $L^2(\mathbb{R})$ with $p=1$. In \cite{cuiz, cuizhao} Cui and Zhao obtained a low regularity result on the (\ref{ost}) with $p=1$ by Fourier restriction norm method. Indeed, they proved that the initial value problem (\ref{ost}) is locally well-posed in $H^s(\mathbb{R})$ for $s>-1$. Finnally, Zhao in \cite{zhao} proved that (\ref{ost}) is locally well-posed in $H^s(\mathbb{R})$ for $s>-5/4$.
\\ \\
The next Cauchy problem of a dissipative version of the KdV equation with rough initial data
\begin{equation}\label{kdvdissipativa}
\left\{
\begin{aligned}
u_t+u_{xxx}+Lu + uu_x&=0, \qquad t\geq 0,   \\
u(x,0)&=u_0(x),
\end{aligned}
\right.
\end{equation}
where $L=|\partial_x|^{2\gamma}$ is defined by a multiplier with symbol $|k|^{2\gamma}$ and $\gamma\geq 1$, is other example that belongs to the class (\ref{fivp}). (\ref{kdvdissipativa}) was studied by Han and Peng in \cite{hanpeng}. They proved working in Bourgain type space the local and global well posedness results for Sobolev spaces $H^s(\mathbb{R})$ of negative order, and the order number is lower than the well known value $-\frac{3}{4}$, i.e., $s>-s_{\gamma}$, where $s_{\gamma}$ denotes the boundary index and it is given by:
\begin{equation*}
\left\{
\begin{aligned}
\dfrac{3-\gamma}{4-2\gamma},\qquad &\text{if}\qquad 1\leq \gamma \leq \dfrac{3}{2},   \\
\gamma,\qquad &\text{if}\qquad \gamma > \dfrac{3}{2}.
\end{aligned}
\right.
\end{equation*}
When $\gamma=1$, this result agrees with that in \cite{Mol}, and it improves the result obtained in \cite{moliriba} in the case $\gamma\geq 1$.
\\ \\
Carvajal and Panthee proved  in \cite{CP} local well-posedness in Sobolev spaces $H^s(\mathbb{R})$ with $s>-3/4$ to the initial value problem (\ref{fivp}) but only to the real case. In particular, they obtained that result when the symbol $\Phi$ is given by
\begin{equation*}
\Phi(\xi)= \sum_{j=0}^{n} \sum_{l=0}^{2m} C_{l,j}\,\xi^l\,|\xi|^j; \quad C_{l,j}\in \mathbb{R}, \;\; C_{2m,n}=-1 .
\end{equation*}
The examples above correspond to this case. They followed the theory developed by Bourgain \cite{Bourgain} and Kenig, Ponce and Vega \cite{KPV}. They used the usual Bourgain's space associated to the KdV equation instead of the Bourgain's space associated to the linear part of the initial value problem (\ref{problema}).

\subsection{\textsc{Notation and Main Result}}
We recall the theory developed by T. Tao in \cite{tao}. We define the Fourier transform of a function $f$ defined on $[0,\lambda]$ by
\begin{align}
\widehat{f}(k)&=\int_0^{\lambda}e^{-2\pi ikx}\,f(x)\,dx \label{fouriertransfper}\\
\intertext{and we have the Fourier inversion formula}
f(x)&=\int e^{2\pi ikx}\,\widehat{f}(k)\,(dk)_{\lambda} \label{fourierinvform}\\
\intertext{where $(dk)_{\lambda}$ is the normalized counting measure on $\mathbb{Z}/\lambda$ given by}
\int a(k)\,(dk)_{\lambda}&=\frac{1}{\lambda}\,\sum_{k\in \mathbb{Z}/\lambda} a(k) .\label{countmeasure} \\
\intertext{The usual properties of the Fourier transform hold:}
\nor{f}{L^2([0,\lambda])}&=\nor{\widehat{f}}{L^2((dk)_{\lambda})} \qquad \text{(Plancherel),} \label{plancherel} \\
\int_0^{\lambda}f(x)\,\bar{g(x)}\,dx &= \int \widehat{f}(k)\,\bar{\widehat{g}}(k)\,(dk)_{\lambda} \qquad \text{(Parseval),} \label{parseval} \\
\widehat{fg}(k)=\widehat{f}\ast_{\lambda}\widehat{g}\,(k)&=\int \widehat{f}(k-k_1)\,g(k_1)\,(dk_1)_{\lambda} \qquad \text{(Convolution),} \label{convolution} \\
\intertext{and so on. If we apply $\partial_x^m$, $m\in \mathbb{N}$, to (\ref{fourierinvform}), we obtain}
\partial_x^mf(x)&=\int e^{2\pi ikx}\,(2\pi ik)^m\,\widehat{f}(k)\,(dk)_{\lambda}. \label{transfderiv} \\
\intertext{This, together with (\ref{plancherel}), motivates us to define the Sobolev space $H^s([0,\lambda])$ with the norm}
\nor{f}{H^s([0,\lambda])}&=\nor{\langle k\rangle^s\,\widehat{f}(k)}{L^2((dk)_{\lambda})}. \label{sobolevnormper}
\end{align}
We will often denote this space by $H^s_{\lambda}$ for simplicity. For a function $v=v(x,t)$ which is $\lambda$-periodic with respect to the $x$ variable and with the time variable $t\in \mathbb{R}$, we define the space-time Fourier transform $\widehat{v}=\widehat{v}(k,\tau)$ for $k\in \mathbb{Z}/\lambda$ and $\tau \in \mathbb{R}$ by
\begin{align}
\widehat{v}(k,\tau)&=\int_{\mathbb{R}} \int_0^{\lambda} e^{-2\pi ikx}\,e^{-2\pi i\tau t}\,v(x,t)\,dx\,dt. \label{fouriertransfxt} \\
\intertext{This transform is inverted by}
v(x,t)&=\int_{\mathbb{R}} \int e^{2\pi ikx}\,e^{2\pi i\tau t}\,\widehat{v}(k,\tau)\,(dk)_{\lambda}\,d\tau. \label{fourierinvxt}
\end{align}
Similarly, $\widehat{v}(k,t)$ and $\widehat{v}(x,\tau)$ will denote the Fourier transform of $v(x,t)$ respect to the variables $x$ and $t$, respectively. $C$ will be denote a positive constant which may be different even in a single chain of inequalities. If $X, Y$ are Banach spaces, $\mathcal{B}(X;Y)$ is the space of the linear continue operators of $X$ in $Y$ with the norm  $\left\|T\right\|_{X\to Y}=\sup_{\left\|x\right\|_X=1}\left\|Tx\right\|_Y$. If $X=Y$ we will write $\mathcal{B}(X)$ inside of $\mathcal{B}(X;X)$.
The solution to the linear KdV equation:
\begin{equation}\label{kdvlinear}
\left\{
\begin{aligned}
u_t + u_{xxx}&=0, \quad x\in [0,\lambda], \quad t\in \mathbb{R}, \\
u(x,0)&=u_0(x),
\end{aligned}
\right.
\end{equation}
is given by
\begin{align}
u(x,t)=U_{\lambda}(t)\,u_0(x)&=\int e^{2\pi ikx}\,e^{-(2\pi ik)^3t}\,\widehat{u_0}(k)\,(dk)_{\lambda}, \label{linearkdvsol} \\
\intertext{which may be rewritten as a space-time inverse Fourier transform,}
U_{\lambda}(t)\,u_0(x)&=\int_{\mathbb{R}}\int e^{2\pi i\tau t}\,e^{2\pi ikx}\,\delta(\tau -4\pi^2 k^3)\,\widehat{u_0}(k)\,(dk)_{\lambda}\,d\tau, \label{grupokdv}
\end{align}
where $\delta(\kappa)$ represents a 1-dimensional Dirac mass at $\kappa=0$. This shows that $U_{\lambda}(\cdot)\,u_0$ has its space-time Fourier transform supported precisely on the cubic $\tau=4\pi^2k^3$ in $\mathbb{Z}/\lambda \times \mathbb{R}$. So, we recall the known Bourgain's space associated to the KdV equation. For $s$, $b \in \mathbb{R}$, we define the $\mathcal{Y}_{s,b}([0,\lambda]\times \mathbb{R})$ spaces for $\lambda$-periodic KdV via the norm
\begin{equation}\label{normakdv}
\begin{split}
\nor{u}{\mathcal{Y}_{s,b}([0,\lambda]\times \mathbb{R})}&\equiv \nor{\langle\tau - 4\pi^2k^3\rangle^b\,\langle k\rangle^s\,\widehat{u}(k,\tau)}{L^2((dk)_{\lambda})L_{\tau}^2}=\nor{\langle\tau\rangle^b\,\langle k\rangle^s\,(U_{\lambda}(-t)u)^{\wedge}(k,\tau)}{L^2((dk)_{\lambda})L_{\tau}^2} \\
&=\biggl(\int\int_{\mathbb{R}}\langle\tau \rangle^{2b}\,\langle k\rangle^{2s}\,|(U_{\lambda}(-t)u)^{\wedge}(k,\tau)|^2\,d\tau \,(dk)_{\lambda} \biggr)^{1/2}.
\end{split}
\end{equation}
\begin{rem}\label{meanzero}
The spatial mean $\int_{\mathbb{T}}u(x,t)\,dx$ is conserved during the evolution of the KdV equation. We may assume that the initial data $\phi$ satisfies a mean-zero assumption $\int_{\mathbb{T}}\phi(x)\,dx$ since otherwise we can replace the dependent variable $u$ by $v=u-\int_{\mathbb{T}}\phi$ at the expense of a harmless linear first order term. This observation was used by Bourgain in \cite{Bourgain}.
\end{rem}
Since the $\lambda$-periodic initial value problem for KdV is equivalent to the integral equation
\begin{equation}\label{inteqkdv}
u(t)=U_{\lambda}(t)\phi -\frac{1}{2}\int_0^tU_{\lambda}(t-t')\,\partial_x(u^2(t'))\,dt',
\end{equation}
the study of periodic KdV in \cite{Bourgain} and \cite{KPV} was based in solve (\ref{inteqkdv}) using the contraction principle in the Bourgain's spaces $\mathcal{Y}_{s,1/2}$ which was possible in virtue of the optimal bilinear estimate for $\partial_xu^2$, from Kenig, Ponce and Vega in the periodic case:
\begin{prop}[\cite{KPV}] \label{bilest}
For $s \in (-1/2,0]$  it follows that
$$\nor{\frac{1}{2}\partial_xu^2}{\mathcal{Y}_{s,-1/2}}\leq c \nora{u}{\mathcal{Y}_{s,1/2}}{2}.$$
\end{prop}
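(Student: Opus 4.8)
The plan is to prove Proposition~\ref{bilest} by the now-standard duality reduction of the bilinear estimate to a trilinear lattice sum, along the lines of Kenig, Ponce and Vega. Writing $\sigma_0 = \tau - 4\pi^2k^3$ and setting $f(k,\tau) = \langle k\rangle^s\langle\sigma_0\rangle^{1/2}\widehat u(k,\tau)$, one has $\nor{f}{L^2((dk)_\lambda)L^2_\tau} = \nor{u}{\mathcal Y_{s,1/2}}$; expanding $\widehat{u^2} = \widehat u \ast_{\lambda} \widehat u$ via (\ref{convolution}) and testing the $\mathcal Y_{s,-1/2}$ norm against $g$ with $\nor{g}{L^2((dk)_\lambda)L^2_\tau} = 1$, the asserted inequality becomes
\begin{equation*}
\left| \int\!\!\int \sum_{k_1} M\, f(k_1,\tau_1)\, f(k-k_1,\tau-\tau_1)\, \overline{g(k,\tau)}\, d\tau_1\,(dk_1)_\lambda \right| \leq c\,\nor{f}{L^2((dk)_\lambda)L^2_\tau}^2,
\end{equation*}
where, with $k_2 = k-k_1$, $\tau_2 = \tau-\tau_1$, $\sigma_1 = \tau_1 - 4\pi^2k_1^3$, $\sigma_2 = \tau_2 - 4\pi^2 k_2^3$,
\begin{equation*}
M = M(k,k_1,\tau,\tau_1) = \frac{|k|\,\langle k\rangle^s}{\langle k_1\rangle^s\,\langle k_2\rangle^s\,\langle\sigma_0\rangle^{1/2}\,\langle\sigma_1\rangle^{1/2}\,\langle\sigma_2\rangle^{1/2}}.
\end{equation*}
By Remark~\ref{meanzero} we may take $u$ of zero spatial mean, so $k_1,k_2\neq 0$, and the factor $|k|$ kills the $k=0$ contribution; hence $|k|,|k_1|,|k_2|\geq 1/\lambda$.

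The algebraic core is the resonance identity, valid on the hyperplane $k=k_1+k_2$, $\tau=\tau_1+\tau_2$,
\begin{equation*}
\sigma_0-\sigma_1-\sigma_2 = -4\pi^2\big(k^3-k_1^3-k_2^3\big) = -12\pi^2\,k\,k_1\,k_2 ,
\end{equation*}
which forces $\max\big(\langle\sigma_0\rangle,\langle\sigma_1\rangle,\langle\sigma_2\rangle\big)\gtrsim\langle k\,k_1\,k_2\rangle$. I would split the region of summation into three pieces according to which modulation is largest. Where $\langle\sigma_0\rangle$ dominates, I use $\langle\sigma_0\rangle^{-1/2}\lesssim\langle k\,k_1\,k_2\rangle^{-1/2}$ to convert the derivative loss $|k|$ into the gain $|k_1k_2|^{-1/2}$, then apply Cauchy--Schwarz together with the elementary calculus estimate $\int\langle\tau-a\rangle^{-1}\langle\tau-b\rangle^{-1}\,d\tau\lesssim\langle a-b\rangle^{-1}\log(2+|a-b|)$ to integrate out one time variable, reducing matters to the finiteness of a lattice sum of the schematic form
\begin{equation*}
\sup_{k}\ \sum_{k_1+k_2=k}\ \frac{\langle k\rangle^{2s}}{\langle k_1\rangle^{2s}\,\langle k_2\rangle^{2s}\,\langle k\,k_1\,k_2\rangle}\;<\;\infty .
\end{equation*}
The regions where $\langle\sigma_1\rangle$ or $\langle\sigma_2\rangle$ dominates are handled symmetrically, now pairing $g$ with the surviving $f$ and summing the analogous series. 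It is precisely the finiteness of such series that uses $s>-1/2$: at $s=-1/2$ the inner sum collapses to $\langle k\rangle^{-2}\sum_{k_1}1$, which diverges, explaining why the endpoint is excluded.

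I expect the main obstacle to be the \emph{high-high $\to$ high} interaction, the regime $|k_1|\sim|k_2|\gg1$ with $|k|$ also large, where the bilinear smoothing coming from the resonance is least generous. There one cannot be wasteful: setting $k\,k_1\,k_2 = n$ and summing over the hyperbola-type level set $\{k_1k_2 = n/k\}$, the number of admissible $k_1$ is controlled by the divisor bound $d(m)=O_\epsilon(m^\epsilon)$, and the numerator $\langle k\rangle^{2s}$ (with $s\leq 0$) has to be absorbed by $\langle k_1\rangle^{-2s}\langle k_2\rangle^{-2s}$ through $\langle k\rangle\lesssim\langle k_1\rangle\langle k_2\rangle$, leaving exactly the margin $2(s+1/2)>0$ to beat the divisor loss. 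If a $\lambda$-uniform statement is wanted one should also check that all implicit constants are independent of $\lambda\geq1$, which amounts to carrying $\lambda$ through the counting measure $(dk)_\lambda$ and the lattice $\mathbb Z/\lambda$ throughout; for the application in this paper it suffices to take $\lambda=1$. Alternatively, one may dispose of the non-resonant cases more cheaply by invoking Bourgain's periodic Strichartz inequality $\nor{v}{L^4_{x,t}}\lesssim\nor{v}{\mathcal Y_{0,1/3}}$ and interpolating it against the resonance gain, reserving the explicit lattice count for the genuinely resonant contribution.
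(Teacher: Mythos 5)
First, note that the paper does not prove Proposition \ref{bilest} at all: it is quoted from Kenig--Ponce--Vega \cite{KPV}, with Tao's Corollary 6.5 invoked for the endpoint $s=-1/2$, so there is no in-paper argument to compare against. Your outline follows the standard route (duality, the resonance identity $\sigma_0-\sigma_1-\sigma_2=-12\pi^2\,k\,k_1\,k_2$, a case split on the dominant modulation), and those are the right ingredients, but the reduction you actually write down has a genuine gap. The schematic sum
$$\sup_{k}\ \sum_{k_1+k_2=k}\ \frac{\langle k\rangle^{2s}}{\langle k_1\rangle^{2s}\,\langle k_2\rangle^{2s}\,\langle k\,k_1\,k_2\rangle}$$
is \emph{not} finite on the whole range $s\in(-1/2,0]$: in the high-high-to-low regime $|k_1|\sim|k_2|\sim N\gg|k|$ the summand has size $\langle k\rangle^{2s-1}N^{-4s-2}$, and there are $\sim N$ such $k_1$ per dyadic block, so the $k_1$-sum diverges as soon as $4s+1\le 0$, i.e.\ for every $s\in(-1/2,-1/4]$. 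Hence your claim that the finiteness of this series is ``precisely'' what uses $s>-1/2$ is false, and the step in which you discard the surviving modulation weight after invoking $\max_j\langle\sigma_j\rangle\gtrsim\langle k k_1k_2\rangle$ is too lossy on half of the claimed range.

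The fix --- and the actual content of the KPV proof --- is that after Cauchy--Schwarz and the $\tau_1$-integration one must retain the factor $\langle\sigma_0+12\pi^2 k k_1(k-k_1)\rangle^{-1}$ rather than degrade it to $\langle k k_1 k_2\rangle^{-1}$, and then sum over $k_1$ by exploiting the arithmetic of the level sets of $k_1\mapsto k_1(k-k_1)$ for fixed $k$ and $\sigma_0$: the values of $k k_1(k-k_1)$ are well separated, each value is attained $O_\epsilon(|n|^\epsilon)$ times by the divisor bound, and so $\sum_{k_1}\langle\sigma_0+12\pi^2 kk_1(k-k_1)\rangle^{-1+\epsilon}$ converges with enough room to absorb the weight $\langle k\rangle^{2s}\langle k_1\rangle^{-2s}\langle k_2\rangle^{-2s}$. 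This counting (or, alternatively, Bourgain's periodic $L^4$ Strichartz estimate, which you mention) is needed throughout the regime $|k_1|\sim|k_2|$, not only in the high-high-to-high interaction you single out; the high-high-to-low case is exactly where your schematic sum breaks down. With that correction your plan coincides with the known proof.
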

For the case $s=-1/2$, see the Corollary 6.5 in \cite{tao}. So, it was proved that the initial value problem for KdV on $\mathbb{T}$ is locally well-posed for $s\geq -1/2$. The space $\mathcal{Y}_{s,1/2}$ barely fails to control the $L_t^{\infty}H_x^s$ norm. To ensure continuity of the time flow of the solution Colliander, Keel, Staffilani, Takaoka and Tao, in \cite{CKSTT}, introduced the slightly smaller space $Y^s$ defined via the norm
\begin{align}
\nor{u}{Y^s}&=\nor{u}{\mathcal{Y}_{s,1/2}}+\nor{\langle k\rangle^s\,\widehat{u}(k,\tau)}{L^2((dk)_{\lambda})L^1(d\tau)}, \label{normys} \\
\intertext{and the companion space $Z^s$ defined via the norm}
\nor{u}{Z^s}&=\nor{u}{\mathcal{Y}_{s,-1/2}}+\nor{\frac{\langle k\rangle^s\,\widehat{u}(k,\tau)}{\langle\tau -4\pi^2k^3\rangle}}{L^2((dk)_{\lambda})L^1(d\tau)}. \label{normzs}
\end{align}
Note that, if $u\in Y^s$, then $u\in L_{t}^{\infty}H_x^s$. Thus, they solve the integral equation (\ref{inteqkdv}) based around iteration in the space $Y^s$. They obtained the bilinear estimate for $\partial_xu^2$:
\begin{prop}
If $u$ and $v$ are $\lambda$-periodic functions of $x$, also depending upon $t$ having zero $x$-mean for all $t$, then
\begin{equation}\label{bilestckstt}
\nor{\Psi(t)\,\partial_x(uv)}{Z^{-1/2}}\lesssim \lambda^{0+}\,\nor{u}{\mathcal{Y}_{-1/2,1/2}}\,\nor{v}{\mathcal{Y}_{-1/2,1/2}}.
\end{equation}
where $\Psi \in C_0^{\infty}(\mathbb{R})$ is a cut-off function such that $0\leq \Psi(t)\leq 1$ and is supported on $[-2,2]$ with $\Psi=1$ on $[-1,1]$.
\end{prop}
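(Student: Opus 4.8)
The plan is to pass to the spatial and space--time Fourier transforms, dualize, and exploit the algebraic identity for the KdV resonance function: if $k_0=k_1+k_2$, $\tau_0=\tau_1+\tau_2$ and $\sigma_j:=\tau_j-4\pi^2k_j^3$ for $j=0,1,2$, then
\[
\bigl|\sigma_0-\sigma_1-\sigma_2\bigr|=4\pi^2\bigl|k_0^3-k_1^3-k_2^3\bigr|=12\pi^2\,|k_0k_1k_2|,
\]
so that $\max_j\langle\sigma_j\rangle\gtrsim|k_0k_1k_2|$. By Remark~\ref{meanzero} the zero--mean hypothesis forces $k_0,k_1,k_2\in(\mathbb{Z}/\lambda)\setminus\{0\}$, hence this maximal modulation is always $\gtrsim\lambda^{-3}$; the gain it provides over the derivative factor $|k_0|$ is what makes the estimate run.

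First I would split the $Z^{-1/2}$ norm into its two defining pieces and treat the main one, $\nor{\cdot}{\mathcal{Y}_{-1/2,-1/2}}$; the companion $L^2_{(dk)_\lambda}L^1_{d\tau}$ piece follows from the same analysis after Cauchy--Schwarz in $\tau_0$ against $\langle\sigma_0\rangle^{-1/2-\varepsilon}\in L^2_{d\tau_0}$, which costs only an extra factor $\langle\sigma_0\rangle^{\varepsilon}$, and the cutoff $\Psi$ — entering only through the rapid decay of $\widehat{\Psi}$ under convolution in $\tau_0$ — makes this harmless. For the main piece, dualize: $\mathcal{Y}_{1/2,1/2}$ is the dual of $\mathcal{Y}_{-1/2,-1/2}$ under the natural $L^2$ pairing, so it suffices to bound $\bigl|\iint\widehat{\partial_x(uv)}\;\overline{\widehat{w}}\bigr|$ by $\lambda^{0+}\,\nor{u}{\mathcal{Y}_{-1/2,1/2}}\,\nor{v}{\mathcal{Y}_{-1/2,1/2}}\,\nor{w}{\mathcal{Y}_{1/2,1/2}}$. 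Unfolding with the convolution formula (\ref{convolution}) turns this into a trilinear integral over $\{k_0=k_1+k_2,\;\tau_0=\tau_1+\tau_2\}$ with kernel
\[
\frac{|k_0|\,\langle k_1\rangle^{1/2}\langle k_2\rangle^{1/2}}{\langle k_0\rangle^{1/2}\,\langle\sigma_0\rangle^{1/2}\langle\sigma_1\rangle^{1/2}\langle\sigma_2\rangle^{1/2}}
\]
acting on $L^2$ data $f_0,f_1,f_2$ with $\nor{f_1}{L^2}=\nor{u}{\mathcal{Y}_{-1/2,1/2}}$, $\nor{f_2}{L^2}=\nor{v}{\mathcal{Y}_{-1/2,1/2}}$ and $\nor{f_0}{L^2}=\nor{w}{\mathcal{Y}_{1/2,1/2}}$.

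Next I would run a case analysis on which of $\langle\sigma_0\rangle,\langle\sigma_1\rangle,\langle\sigma_2\rangle$ is the largest (hence $\gtrsim|k_0k_1k_2|$), refined by Littlewood--Paley blocks $|k_j|\sim N_j$. In each region one trades the large modulation factor against the missing negative powers of the $\langle k_j\rangle$ and against a small power used to sum the dyadic series. The analytic inputs are the periodic $L^4_{x,t}$ Strichartz estimate $\nor{U_\lambda(t)\phi}{L^4_{x,t}}\lesssim\lambda^{0+}\nor{\phi}{L^2}$, equivalently $\nor{g}{L^4_{x,t}}\lesssim\lambda^{0+}\nor{g}{\mathcal{Y}_{0,1/3}}$, together with its bilinear refinement: after Plancherel one applies $L^4\times L^4\times L^2$ (or $L^2\times L^4\times L^4$) H\"older to the three factors, uses Cauchy--Schwarz to collapse the convolution, and bounds the residual sum over $(k_1,k_2)$ with $k_1+k_2$ fixed by counting lattice points. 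The factor $\lambda^{0+}$ is produced precisely here: from the periodic Strichartz constants and from a divisor--type bound on the number of $(k_1,k_2)$ for which $k_0k_1k_2$ lies in a prescribed interval.

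The step I expect to be the main obstacle is the endpoint $s=-1/2$ in the high$\times$high$\to$low interaction $N_0\ll N_1\sim N_2=:N$, when one of the \emph{input} modulations, say $\langle\sigma_1\rangle$, realizes the maximum $\gtrsim|k_0k_1k_2|\sim N_0N^2$. There the frequency kernel $|k_0|\langle k_0\rangle^{-1/2}\langle k_1\rangle^{1/2}\langle k_2\rangle^{1/2}\sim N_0^{1/2}N$ is cancelled \emph{exactly} by $\langle\sigma_1\rangle^{-1/2}$, so no slack at all remains, and the surviving factor $f_1$ carries no modulation weight; one must then close using an $L^4_{x,t}$ bound on the two factors that do carry modulation, an $L^2$ bound on $f_1$, and a sharp, $\lambda$--uniform (up to $\lambda^{0+}$) count of the lattice points $(k_1,k_0-k_1)$ for which $|k_0k_1(k_0-k_1)|$ stays inside the modulation window — the $\varepsilon$ needed to sum the dyadic series in $N_0$ and $N$ has to be squeezed out of the surplus in that count. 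One must further check that the companion $L^2_{k}L^1_{\tau}$ piece still closes after the extra $\langle\sigma_0\rangle^{\varepsilon}$ is charged. The remaining regimes — $\langle\sigma_0\rangle$ dominant, or $N_0\gtrsim N_1$ — carry genuine room and follow comparatively directly from the $L^4$ estimate and dyadic summation.
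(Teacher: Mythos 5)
First, a point of reference: the paper does not prove this proposition at all --- it is imported verbatim from \cite{CKSTT} (with the $\mathcal{Y}_{-1/2,-1/2}$ component essentially Corollary 6.5 of \cite{tao}), so your outline has to be measured against that proof. For the $\mathcal{Y}_{-1/2,-1/2}$ part of the $Z^{-1/2}$ norm your plan is the standard one: dualization, the resonance identity $|\sigma_0-\sigma_1-\sigma_2|=12\pi^2|k_0k_1k_2|$ together with the zero-mean reduction of Remark \ref{meanzero}, the periodic $L^4$ estimate, and a divisor-type count in the no-slack region $N_0\ll N_1\sim N_2$ with an input modulation dominant. The obstacle you single out there is indeed the crux, and it is resolved in \cite{tao} and \cite{CKSTT} by exactly the lattice-point count you describe; that part of the outline is right in structure, though of course not executed.

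The genuine gap is your treatment of the $L^2_{(dk)_\lambda}L^1_{d\tau}$ component of (\ref{normzs}). Cauchy--Schwarz in $\tau$ against $\langle\tau-4\pi^2k^3\rangle^{-1/2-\varepsilon}$ reduces that component to the bound $\nor{\partial_x(uv)}{\mathcal{Y}_{-1/2,-1/2+\varepsilon}}\lesssim\nor{u}{\mathcal{Y}_{-1/2,1/2}}\,\nor{v}{\mathcal{Y}_{-1/2,1/2}}$, and this estimate is \emph{false} for every $\varepsilon>0$ at the endpoint $s=-1/2$. Take $\widehat{u}$ supported on $\{k_1=N,\ |\sigma_1|\leq 1\}$ and $\widehat{v}$ on $\{k_2=1-N,\ |\sigma_2|\leq 1\}$; then $\widehat{uv}(1,\cdot)$ is a unit bump on a $\tau$-interval of length $O(1)$ at distance $\sim N^2$ from the cubic, so the left-hand side is $\sim N^{-1+2\varepsilon}$ while the right-hand side is $\sim N^{-1}$. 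This failure is precisely why \cite{CKSTT} work with the $L^1_\tau$ structure of $Z^s$ rather than with $\mathcal{Y}_{s,-1/2+\varepsilon}$: the $L^1_\tau$ piece must be estimated directly, using that in the resonant high$\times$high$\to$low region the output $\widehat{\partial_x(uv)}(k_0,\cdot)$ is concentrated on a $\tau$-interval of length $O(\langle\sigma_1\rangle+\langle\sigma_2\rangle)$ centred at distance $\sim|k_0k_1k_2|$ from $\tau=4\pi^2k_0^3$, so that $\int\langle\sigma_0\rangle^{-1}|\cdot|\,d\tau$ gains a full power of $|k_0k_1k_2|$ rather than the half power that your $L^2$ duality sees. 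Your closing caveat (``one must further check that the companion piece still closes'') is exactly where the plan breaks: the check fails, and a separate argument for that component is required.
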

\begin{rem}
Note that (\ref{bilestckstt}) implies $\nor{\Psi(t)\,\partial_x(uv)}{Z^{-1/2}}\lesssim \lambda^{0+}\,\nor{u}{Y^{-1/2}}\,\nor{v}{Y^{-1/2}}.$
\end{rem}
So, Colliander, Keel, Staffilani, Takaoka and Tao in \cite{CKSTT} reproved that the initial value problem for KdV equation on $\mathbb{T}$ is locally well-posed for $s\geq -1/2$. Our interest here is to obtain well-posedness results for the $\lambda$-periodic initial value problem (\ref{fivp}) with given data $u_0$ in the Sobolev space $H^s_{\lambda}$ of negative order:
\begin{teor}[Main Result]\label{maintheorem}
The initial value problem (\ref{fivp}) with $\eta>0$ and $L$ given by (\ref{operadorl}) is locally well-posed for any data $u_0\in H^s(\mathbb{T})$, for $s\geq -1/2$.
\end{teor}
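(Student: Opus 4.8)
The plan is to solve the Duhamel formulation of (\ref{fivp}) by a contraction--mapping argument in the periodic Bourgain spaces $Y^s$ (with companions $Z^s$) built around the linear KdV flow, following the scheme of Bourgain \cite{Bourgain}, Kenig--Ponce--Vega \cite{KPV}, Tao \cite{tao} and Colliander--Keel--Staffilani--Takaoka--Tao \cite{CKSTT} for pure periodic KdV, and of Molinet--Ribaud \cite{Mol} and Carvajal--Panthee \cite{CP} for dissipative perturbations. The only structural novelty is the multiplier $\widehat{W_\eta(t)f}(k)=e^{\eta\Phi(k)t}\widehat f(k)$ coming from the term $\eta L$. The key point is that, since $\Phi(k)\le\alpha$, one has $|e^{\eta\Phi(k)t}|\le e^{\eta\alpha t}$ for every $t\ge0$ and every $k$; hence $W_\eta(t)$ never touches the dispersion relation $\tau=4\pi^2k^3$ and, on a finite positive time interval, is a harmless bounded perturbation of the KdV group $U_\lambda(t)$. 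After the usual reduction to mean-zero data (legitimate here because the spatial mean of a solution of (\ref{fivp}) solves a linear ODE), we look for a fixed point, in a ball of $Y^s$, of
\[
\Lambda u(t)=\psi(t)\,S(t)u_0-\psi_T(t)\int_0^t S(t-t')\,\partial_x\!\Big(\tfrac12(\psi_T u)^2\Big)(t')\,dt',
\]
with $S(t)=W_\eta(t)U_\lambda(t)$ and $\psi,\psi_T=\psi(\cdot/T)$ the standard smooth time cut-offs.

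The argument rests on three estimates, of which only the first two involve $L$. (i) The homogeneous estimate $\|\psi(t)S(t)u_0\|_{Y^s}\lesssim\|u_0\|_{H^s_\lambda}$: conjugating by $U_\lambda(-t)$ reduces it to bounding $\sup_k\|\psi(t)e^{\eta\Phi(k)|t|}\|_{H^{1/2}_t}$ together with the analogous $L^1_\tau$ quantity, after extending the multiplier to $t<0$ by $e^{\eta\Phi(k)|t|}\le e^{\eta\alpha|t|}$; by scale invariance of $\dot H^{1/2}_t$ the function $e^{\eta\Phi(k)|t|}$ --- a spike concentrating at the origin as $\Phi(k)\to-\infty$ --- has $\dot H^{1/2}_t$-norm independent of $k$, which yields the bound uniformly in $k$. (ii) The inhomogeneous (Duhamel) estimate $\|\psi_T(t)\int_0^t S(t-t')F(t')\,dt'\|_{Y^s}\lesssim\|F\|_{Z^s}$: taking the space-time Fourier transform, and again extending the dissipative factor to negative times by $e^{\eta\Phi(k)|t|}$, one obtains for each $k$ a kernel of the form
\[
\int\frac{\widehat F(k,\tau')}{2\pi i(\tau'-4\pi^2k^3)-\eta\Phi(k)}\Big[\widehat{\psi_T}(\tau-\tau')-\widehat{\psi_T e^{\eta\Phi(k)|\cdot|}}\big(\tau-4\pi^2k^3\big)\Big]\,d\tau' ;
\]
since $\text{Re}\big(2\pi i(\tau'-4\pi^2k^3)-\eta\Phi(k)\big)=-\eta\Phi(k)\ge-\eta\alpha$ and $|2\pi i(\tau'-4\pi^2k^3)-\eta\Phi(k)|\ge 2\pi|\tau'-4\pi^2k^3|$, the apparent singularity is removed by the same algebraic cancellation as in the KdV case, and the kernel is controlled by the corresponding KdV expression (known from \cite{tao,CKSTT}) plus terms estimated exactly by the $L^2_kL^1_\tau$ component of $\|\cdot\|_{Z^s}$, using the uniform $H^{1/2}_t$ bound from (i) for the diagonal factor $\widehat{\psi_T e^{\eta\Phi(k)|\cdot|}}(\tau-4\pi^2k^3)$.

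The third ingredient is the bilinear estimate $\|\partial_x(uv)\|_{Z^s}\lesssim\lambda^{0+}\|u\|_{Y^s}\|v\|_{Y^s}$, which is Proposition \ref{bilest} together with the endpoint refinement (\ref{bilestckstt}) for $s=-1/2$ (and, for $s>0$, the classical periodic KdV bilinear estimate), and which does not see $L$ at all. Granting these three estimates, a standard fixed-point argument --- with the usual gain of a small power of $T$ in the Duhamel term away from the endpoint, and the time-localization argument of \cite{CKSTT} at $s=-1/2$ --- shows that $\Lambda$ is a contraction on a small ball of $Y^s$ for $T=T(\|u_0\|_{H^s_\lambda})>0$; since $Y^s\hookrightarrow C([0,T];H^s_\lambda)$, the fixed point is the desired solution, and the same estimates give uniqueness in the ball and Lipschitz continuous dependence on $u_0$. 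Away from the endpoint ($-1/2<s\le0$, or $s>0$) one may work in $\mathcal{Y}_{s,1/2}$ and $\mathcal{Y}_{s,-1/2}$ in place of $Y^s,Z^s$ and invoke Proposition \ref{bilest} directly, so the proof simplifies.

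I expect the Duhamel estimate (ii) to be the only genuine obstacle: one must check that inserting the multiplier $e^{\eta\Phi(k)(t-t')}$ --- bounded for $t\ge t'\ge0$, but singular in $t$ near $t'=t$ as $\Phi(k)\to-\infty$ and ill-behaved for $t<0$ --- into the time integral neither destroys the $\langle\tau-4\pi^2k^3\rangle^{1/2}$ smoothing encoded in $Y^s$ nor spoils the time-continuity provided by the $L^2_kL^1_\tau$ piece of $Z^s$. It is precisely the choice of the even extension $e^{\eta\Phi(k)|t|}$ to negative times, combined with the scale invariance of the critical index $b=1/2$, that makes (i) and (ii) hold uniformly in $k$ at the endpoint $s=-1/2$.
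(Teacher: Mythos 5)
Your proposal is correct and follows essentially the same route as the paper: the even-in-time extension $e^{\eta\Phi(k)|t|}$ of the dissipative factor, uniform-in-$k$ linear estimates in $Y^s$ and $Z^s$ exploiting only $\Phi\le\alpha$, the CKSTT bilinear estimate at $s=-1/2$, and a fixed point closed by the CKSTT rescaling of the period (the paper, like \cite{CKSTT}, gets the required smallness from $\nor{v_0}{H^{-1/2}_{\sigma\lambda}}\lesssim\sigma^{-1}\nor{u_0}{H^{-1/2}_{\lambda}}$ against the $\lambda^{0+}$ loss in the bilinear estimate, not from a power of $T$, since none is available at $b=1/2$). The only real difference is cosmetic: for the Duhamel estimate the paper uses the identity $\chi_{[0,t]}(t')=\tfrac12\,(a(t')+a(t-t'))$ rather than your explicit kernel with denominator $2\pi i(\tau'-4\pi^2k^3)-\eta\Phi(k)$, but both reduce to the same uniform bound $|\widehat{\tilde{a}_k}(\tau)|\lesssim (1+|\tau|)^{-1}$ obtained by integrating by parts twice, which is exactly the mechanism you identified as the crux.
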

To prove this theorem we use Bourgain's type space. So, we should be able to write (\ref{fivp}) for all $t\in \mathbb{R}$. For this, we define
\begin{equation}\label{etadet}
\eta (t) \equiv \eta \, sgn(t) = \left\{
\begin{aligned}
\eta&, \quad \text{if}\;\; t\geq 0, \\
-\eta&, \quad \text{if}\;\; t<0,
\end{aligned}
\right.
\end{equation}
and write (\ref{fivp}) in the form
\begin{equation}\label{problema}
\left\{
\begin{aligned}
u_t+u_{xxx}+\eta(t) Lu + uu_x &=0, \qquad x\in [0,\lambda], \;\;\; t\in \mathbb{R}, \\
u(x,0)&=u_0(x).
\end{aligned}
\right.
\end{equation}
We first want to build a representation formula for the solution of the linearization of (\ref{fivp}) about the zero solution. So, we wish to solve the linear homogeneous $\lambda$-periodic initial value problem
\begin{equation}\label{fivplinear}
\left\{
\begin{aligned}
w_t+w_{xxx}+\eta(t) Lw &=0, \qquad x\in [0,\lambda], \;\;\; t\in \mathbb{R}, \\
w(x,0)&=w_0(x).
\end{aligned}
\right.
\end{equation}
The Fourier inversion formula (\ref{fourierinvform}) allows us to write the solution of (\ref{fivplinear}):
\begin{equation}
w(x,t)=V_{\lambda}(t)\,w_0(x)=\int e^{2\pi ikx}\,e^{-(2\pi ik)^3t+\eta \Phi(k)|t|}\,\widehat{w_0}(k)\,(dk)_{\lambda}. \label{linearsol}
\end{equation}
Observe that, defining $\widetilde{U_{\lambda}}(t)$ by
$$(\widetilde{U_{\lambda}}(t)w_0)^{\wedge}(k)=e^{\eta |t| \Phi(k)}\widehat{w_0}(k), \quad k\in \mathbb{Z}/\lambda,$$
the semigroup $V_{\lambda}(t)$ can be written as $V_{\lambda}(t)=U_{\lambda}(t)\,\widetilde{U_{\lambda}}(t)$ where $U_{\lambda}(t)$ is the unitary group of the KdV (\ref{grupokdv}).
We next find a representation for the solution of the linear inhomogeneous $\lambda$-periodic initial value problem
\begin{equation}\label{fivpinhom}
\left\{
\begin{aligned}
v_t+v_{xxx}+\eta(t) Lv &=F, \qquad x\in [0,\lambda], \;\;\; t\in \mathbb{R}, \\
v(x,0)&=0,
\end{aligned}
\right.
\end{equation}
with $F=F(x,t)$ a given time-dependent $\lambda$-periodic (in $x$) function. By Duhamel's principle,
\begin{equation}
v(x,t)=\int_0^tV_{\lambda}(t-t')\,F(x,t')\,dt'. \label{inhomsol}
\end{equation}
We apply (\ref{linearsol}), rewrite $\widehat{F}(k,t')$ using the Fourier inversion formula in the time variable and rearrange integrations to find
\begin{equation}
v(x,t)=\int_{\mathbb{R}}\int e^{2\pi ikx}\,e^{2\pi i(4\pi^2k^3)t+\eta \Phi(k)|t|}\,\int_0^te^{[2\pi i(\tau - 4\pi^2k^3)-\eta(t)\Phi(k)]t'}\,dt'\;\widehat{F}(k,\tau)\,(dk)_{\lambda}\,d\tau .\label{inhomsoluno}
\end{equation}
Performing the $t'$-integration, we find
\begin{equation}
v(x,t)=\int_{\mathbb{R}}\int e^{2\pi ikx}\,e^{2\pi i(4\pi^2k^3)t+\eta \Phi(k)|t|}\,\frac{e^{[2\pi i(\tau - 4\pi^2k^3)-\eta(t)\Phi(k)]t}-1}{2\pi i(\tau - 4\pi^2k^3)-\eta(t)\Phi(k)}\;\widehat{F}(k,\tau)\,(dk)_{\lambda}\,d\tau .\label{inhomsoldos}
\end{equation}
Then, the $\lambda$-periodic initial value problem for (\ref{problema}) is equivalent to the integral equation
\begin{equation}
u(t)=V_{\lambda}(t)u_0-\frac{1}{2}\int_0^tV_{\lambda}(t-t')\,\partial_x(u^2(t'))\,dt'. \label{integralequation}
\end{equation}
The integral equation (\ref{integralequation}) can be solved using the contraction principle in the space $\mathcal{Y}_{s,1/2}$ following the ideas of Carvajal and Panthee in \cite{CP}. The main difficulty to resolve it of this way is the periodic bilinear estimate for $\partial_xu^2$, given in the Proposition \ref{bilest}, because it's very restrictive compared with the bilinear estimate (see Theorem 1.1 in \cite{KPV}) of the real case in which $b\in (1/2,1)$. So, we shall obtain a refined estimative to the forcing term of the integral equation associated to (\ref{problema}) which will permits us to use the Proposition \ref{bilest} to solve (\ref{integralequation}). This refinement is made in the Proposition \ref{forcingterm} but we prove our main result via the contraction principle in the space $Y^s$ and the bilinear estimate (\ref{bilestckstt}) from Colliander, Keel, Staffilani, Takaoka and Tao.
\\ \\
The layout of this paper is as follows. In Section $2$ we present some basic results. In Section $3$ we give the boundedness results for linear operators involving the spaces $Y^s$, $Z^s$ and $\mathcal{Y}_{s,1/2}$. The proof of the main Theorem \ref{maintheorem} will be given in Section $4$.


\setcounter{equation}{0}
\section{\textsc{Preliminary Results}}

\begin{lema}\label{lema}
Let $a \le 0$, $\psi\in C_0^{\infty}$ with support in $[-2,2]$ and $\psi_T(t)=\psi(t/T)$. Then,
\begin{align}
\nor{\psi_T(t)\,\int_0^te^{a|t-x|}g(x)\,dx}{L^2}&\leq \dfrac{C\,(1+T)}{1+|a|}\,\nor{g}{L^2}. \label{iresult} \\
\intertext{If $g(0)=0$,}
\nor{\psi_T(t)\,\frac{d}{dt}\;\int_0^te^{a|t-x|}\,g(x)\,dx}{L^2}&\leq \dfrac{C\,(1+T)}{1+|a|}\;\nor{\frac{dg}{dt}}{L^2}, \label{iiresult} \\
\intertext{and,}
\nor{sgn(\cdot)\,g(\cdot)}{H^1}&\leq \nor{g(\cdot)}{H^1}. \label{iiiresult}
\end{align}
$C=C_{\psi}=\max \Bigl\{\nor{\psi}{L^{\infty}}, \nor{\frac{d\psi}{dt}}{L^{\infty}}\Bigr\}$ is a constant depending on $\psi$.
\end{lema}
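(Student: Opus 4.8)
The plan is to regard the map $g\mapsto h_g(t):=\int_0^t e^{a|t-x|}g(x)\,dx$ as a truncated convolution operator and to reduce all three assertions to that observation.

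For \eqref{iresult}: since $\psi_T$ is supported in $[-2T,2T]$ and $\nor{\psi_T}{L^\infty}=\nor{\psi}{L^\infty}$, it suffices to estimate $\nor{h_g}{L^2([-2T,2T])}$. I would first treat $t\in[0,2T]$, where $|t-x|=t-x$ for $0\le x\le t$; since $a\le0$ one has $0\le e^{a(t-x)}\le 1$ with $0\le t-x\le 2T$, so on $[0,2T]$ one can write $h_g=K_T*(g\,\mathbf{1}_{[0,\infty)})$ with the kernel $K_T(y)=e^{ay}\mathbf{1}_{[0,2T]}(y)$. Young's convolution inequality then gives $\nor{h_g}{L^2([0,2T])}\le\nor{K_T}{L^1}\nor{g}{L^2}$, and $\nor{K_T}{L^1}=\int_0^{2T}e^{-|a|y}\,dy\le\min\{2T,\,|a|^{-1}\}$; an elementary case split ($|a|\le 1$ versus $|a|>1$) yields $\min\{2T,\,|a|^{-1}\}\le C(1+T)/(1+|a|)$ with $C$ absolute. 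The interval $[-2T,0]$ is handled by the substitution $t\mapsto -t$, $x\mapsto -x$, which replaces $g$ by $g(-\cdot)$ and alters neither $\nor{g}{L^2}$ nor the form of the operator; this finishes \eqref{iresult}.

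For \eqref{iiresult}: the crucial step is to push $d/dt$ through the operator. Integration by parts in $x$ inside $\int_0^t e^{a|t-x|}g(x)\,dx$ produces a boundary term at $x=0$ that is a multiple of $g(0)=0$ and hence vanishes; differentiating what is left gives, for $t\neq0$,
\[
\frac{d}{dt}\int_0^t e^{a|t-x|}g(x)\,dx=\int_0^t e^{a|t-x|}g'(x)\,dx,
\]
and this extends to $t=0$ by continuity. Thus \eqref{iiresult} is immediate from \eqref{iresult} with $g$ replaced by $dg/dt$. For \eqref{iiiresult}: since $g(0)=0$ and $H^1(\mathbb R)$ embeds continuously in the space of bounded continuous functions, $f:=sgn(\cdot)\,g(\cdot)$ is continuous across $0$, and a test-function computation—integrating by parts separately on $(-\infty,0)$ and $(0,\infty)$, the endpoint contributions at $0$ cancelling because $g(0)=0$—shows $f'=sgn(\cdot)\,g'(\cdot)\in L^2$. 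Hence $\nor{f}{L^2}=\nor{g}{L^2}$ and $\nor{f'}{L^2}=\nor{g'}{L^2}$, so in fact $\nor{sgn(\cdot)\,g(\cdot)}{H^1}=\nor{g(\cdot)}{H^1}$, which is stronger than claimed.

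The only step needing genuine care is the commutation of $d/dt$ with the operator in \eqref{iiresult}: the hypothesis $g(0)=0$ is precisely what annihilates the boundary term and permits the reduction to \eqref{iresult}. The rest is routine once the operator is viewed as a truncated convolution, the single piece of bookkeeping being the elementary bound $\min\{2T,\,|a|^{-1}\}\lesssim(1+T)/(1+|a|)$.
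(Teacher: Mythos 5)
Your proposal is correct and follows essentially the same route as the paper: for (\ref{iresult}) the paper argues by duality and Cauchy--Schwarz, which amounts to the same $L^1$-kernel bound $\min\{2T,|a|^{-1}\}\lesssim (1+T)/(1+|a|)$ that you extract via Young's inequality, and for (\ref{iiresult}) it uses the same commutation identity (obtained by a change of variables rather than integration by parts), with $g(0)=0$ annihilating the boundary term before reducing to (\ref{iresult}). For (\ref{iiiresult}) the paper only remarks that ``an easy computation'' suffices, and your computation (equality of both the $L^2$ norms and, using $g(0)=0$ to rule out a Dirac mass at the origin, of the norms of the derivatives) is the intended one.
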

\begin{proof}[Proof]
We are going to argue by duality to obtain (\ref{iresult}). We take $\varphi\in L^2$ with $\nor{\varphi}{L^2}\leq 1$. Then,
\begin{align}
\int_{\mathbb{R}}\varphi(t)\,\Bigl\{\psi_T(t)&\int_0^te^{a|t-x|}g(x)\,dx\Bigr\}\,dt= \int_{-2T}^{2T} \int_0^t\varphi(t)\,\{\psi_T(t)e^{a|x|}g(t-x)\,dx\}\,dt \notag \\
&=\int_{-2T}^0e^{a|x|}\int_{-2T}^x\varphi(t)\psi_T(t)g(t-x)\,dt\,dx + \int_0^{2T}e^{a|x|}\int_x^{2T}\varphi(t)\psi_T(t)g(t-x)\,dt\,dx \notag \\
&\leq 2\,\int_{-2T}^{2T}e^{a|x|}\,\nor{\varphi}{L^2}\,\nor{\psi_T(\cdot)g(\cdot -x)}{L^2}\,dx\leq \frac{C}{|a|}\,\nor{g}{L^2}. \label{causch}
\end{align}
It was used the Cauchy-Schwartz's inequality to obtain the first inequality in (\ref{causch}). Also, it is truth that 
\begin{align}    
\int_{-2T}^{2T}e^{a|x|}\,\nor{\varphi}{L^2}\,\nor{\psi_T(\cdot)g(\cdot -x)}{L^2}\,dx &\leq C\,T\,\nor{g}{L^2}. \label{causchuno}
\end{align}
From (\ref{causch}) and (\ref{causchuno}), we conclude (\ref{iresult}). We use that $g(0)=0$ to obtain
\begin{align*}
\frac{d}{dt}\biggl(\int_0^te^{a|t-x|}\,g(x)\,dx\biggr)= \int_0^te^{a|x|}&\,\frac{dg}{dt}(t-x)\,dx= \int_0^te^{a|t-x|}\,\frac{dg}{dx}(x)\,dx
\end{align*}
and this, together with (\ref{iresult}), implies (\ref{iiresult}). An easy computation shows (\ref{iiiresult}).
\end{proof}
\begin{rem}\label{notaciondepsi}
We consider a cut-off function $\Psi \in C^{\infty}(\mathbb{R})$, such that $0\leq \Psi(t)\leq 1$,
\[\Psi(t)=\begin{cases}
1, \;\;\;& \text{if} \;\;|t|\leq 1   \\
0, \;\;\;& \text{if} \;\;|t|\geq 2.  \label{Psi}
\end{cases}\]
Let us define $\Psi_{T}(t)=\Psi(\frac{t}{T})$ and $\widetilde{\Psi_{T}}(t)= sgn(t)\Psi_{T}(t)$. Note that multiplication by $\Psi(t)$ is a bounded operation on the spaces $Y^s$, $Z^s$ and $\mathcal{Y}_{s,b}$.
\end{rem}
The next result will allow us to prove the Lemma \ref{chave6} and to reduce the proof of (\ref{sothankgodone}) .
\begin{prop}\label{fermin}
Let $0\leq b\leq 1$, $\alpha_1$, $\alpha_2$ negatives and $a=\alpha_1+\alpha_2$. Then,
\begin{equation}
\nor{\Psi_T(t)\,\int_0^te^{a|t-x|}\,f(x)\,dx}{H^b}\leq C\,(1+T)\,\nor{\Psi_{2T}(t)\int_0^te^{\alpha_2\,|t-x|}\,f(x)\,dx}{H^b} ,\label{chave7}
\end{equation}
where $C=C_{\Psi}=\max \Bigl\{\nor{\Psi}{L^{\infty}}, \nor{\frac{d\Psi}{dt}}{L^{\infty}}\Bigr\}$ is a constant depending on $\Psi$.
\end{prop}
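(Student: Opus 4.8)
The plan is to reduce the integral carrying the exponent $a=\alpha_1+\alpha_2$ to the one carrying the exponent $\alpha_2$ by means of an exact identity, and then to treat the correction produced by $\alpha_1$ as a bounded perturbation with the help of Lemma \ref{lema}. For $f$ regular enough, write $F_a(t)=\int_0^te^{a|t-x|}\,f(x)\,dx$ and $G(t)=\int_0^te^{\alpha_2|t-x|}\,f(x)\,dx$. The first step is to establish, for all $t\in\mathbb{R}$, the identity
\begin{equation}\label{fermin-id}
F_a(t)=G(t)+\alpha_1\,sgn(t)\int_0^te^{a|t-x|}\,G(x)\,dx .
\end{equation}
To prove (\ref{fermin-id}) note that, for $t\neq 0$, both $F_a$ and $G$ solve linear ODEs, namely $F_a'(t)=f(t)+a\,sgn(t)\,F_a(t)$ and $G'(t)=f(t)+\alpha_2\,sgn(t)\,G(t)$, with $F_a(0)=G(0)=0$. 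Differentiating the right-hand side of (\ref{fermin-id}) and using the ODE for $G$, the elementary relation $\frac{d}{dt}\int_0^te^{a|t-x|}G(x)\,dx=G(t)+a\,sgn(t)\int_0^te^{a|t-x|}G(x)\,dx$, and $\alpha_1+\alpha_2=a$, one checks that it too solves $y'=f+a\,sgn(t)\,y$ with $y(0)=0$; hence (\ref{fermin-id}) follows by uniqueness on $\{t>0\}$ and on $\{t<0\}$ together with continuity at $t=0$. (Equivalently, substitute $f=G'-\alpha_2\,sgn(t)\,G$ into $F_a$ and integrate by parts.)

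Next I would localize in time. If $\Psi_T(t)\neq0$ then $|t|\le2T$, so in the integral in (\ref{fermin-id}) the variable $x$ lies between $0$ and $t$, hence $|x|\le2T$, and there $G$ may be replaced by $\Psi_{2T}G$; likewise $\Psi_T G=\Psi_T\,\Psi_{2T}G$. Thus, on the support of $\Psi_T$,
\begin{equation}\label{fermin-id2}
\Psi_T(t)\,F_a(t)=\Psi_T(t)\,(\Psi_{2T}G)(t)+\alpha_1\,\widetilde{\Psi_T}(t)\int_0^te^{a|t-x|}\,(\Psi_{2T}G)(x)\,dx ,
\end{equation}
with $\widetilde{\Psi_T}=sgn(t)\Psi_T$ as in Remark \ref{notaciondepsi}. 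Now take $H^b$ norms for $b=0$ and $b=1$ and interpolate. For $b=0$: the first term is $\leq\nor{\Psi_{2T}G}{L^2}$ since $0\le\Psi_T\le1$, and the second is estimated by (\ref{iresult}) applied with $g=\Psi_{2T}G$ and parameter $a$, giving $\dfrac{C(1+T)\,|\alpha_1|}{1+|a|}\,\nor{\Psi_{2T}G}{L^2}$; since $\alpha_1,\alpha_2<0$ we have $|a|=|\alpha_1|+|\alpha_2|$, so $|\alpha_1|/(1+|a|)\le1$ and the second term is $\le C(1+T)\nor{\Psi_{2T}G}{L^2}$. For $b=1$ one differentiates (\ref{fermin-id2}): the term where $\frac{d}{dt}$ falls on the inner integral is controlled by (\ref{iiresult}), which applies since $(\Psi_{2T}G)(0)=0$ (because $G(0)=0$); the terms where $\frac{d}{dt}$ falls on $\Psi_T$ or $\widetilde{\Psi_T}$ are controlled using (\ref{iresult}) together with (\ref{iiiresult}), the latter being available since the functions to which it is applied vanish at $t=0$; and once more $|\alpha_1|/(1+|a|)\le1$. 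Collecting the $(1+T)$ factors (and the harmless factor coming from differentiating the cutoff) yields $\nor{\Psi_T F_a}{H^1}\le C(1+T)\nor{\Psi_{2T}G}{H^1}$, and interpolation between $b=0$ and $b=1$ gives (\ref{chave7}) for every $0\le b\le1$.

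The main obstacle is the proof of the identity (\ref{fermin-id}), and in particular the careful bookkeeping of signs for $t<0$: this is exactly why the factor $sgn(t)$ (equivalently, the cutoff $\widetilde{\Psi_T}$) appears, and why the $H^1$ estimate must invoke (\ref{iiiresult}) rather than a plain multiplier bound. Once (\ref{fermin-id}) and Lemma \ref{lema} are available, the remaining estimates are routine.
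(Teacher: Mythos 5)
Your proof is correct and follows essentially the same route as the paper: the identity $F_a(t)=G(t)+\alpha_1\,sgn(t)\int_0^te^{a|t-x|}G(x)\,dx$ is exactly the paper's (\ref{arie}) (you derive it by ODE uniqueness, the paper by integration by parts, which you note is equivalent), and the endpoint cases $b=0,1$ via Lemma \ref{lema} followed by interpolation are identical. The only point you gloss over is the factor $1/T$ produced when the derivative falls on the cutoff, which the paper absorbs through the inequality $\nora{\Psi_{2T}g}{L^2}{2}\leq C\,T\,\nor{\Psi_{2T}g}{L^2}\,\nor{\frac{d}{dt}(\Psi_{2T}g)}{L^2}$ rather than dismissing it as harmless.
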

\begin{proof}
Let $g(t)=\int_0^te^{\alpha_2|t-x|}\,f(x)\,dx$. Thus, $\frac{dg}{dt}(t)=f(t)+ \alpha_2\,sgn(t)\,g(t)$. Integrating by parts, we have
\begin{align}
\int_0^te^{a|t-x|}\,f(x)\,dx &= \int_0^te^{a(|t|-|x|)}\,\frac{dg}{dx}(x)\,dx - \alpha_2\,\int_0^te^{a|t-x|}\,sgn(x)\,g(x)\,dx \notag \\
&= g(t)+\alpha_1\,sgn(t)\,\int_0^te^{a|t-x|}\,g(x)\,dx .\label{arie}
\end{align}
We obtain (\ref{chave7}) when $b=0$ as consequence of (\ref{arie}), (\ref{iresult}) and
\begin{equation*}
\nor{\alpha_1\,sgn(t)\,\Psi_T(t)\,\int_0^te^{a|t-x|}\,g(x)\,dx}{L^2}\leq \dfrac{|\alpha_1|\,C\,(1+T)}{1+|\alpha_1|+|\alpha_2|}\nor{\Psi_{2T}\,g}{L^2}.
\end{equation*}
Now, we are going to obtain (\ref{chave7}) when $b=1$. We know from (\ref{arie}) that
\begin{equation}\label{sandra}
\nor{\Psi_T(t)\,\int_0^te^{a|t-x|}\,f(x)\,dx}{H^1}\leq \nor{\Psi_T(t)\,g(t)}{H^1}+|\alpha_1|\nor{sgn(t)\,\Psi_T(t)\,\int_0^te^{a|t-x|}\,g(x)\,dx}{H^1}.
\end{equation}
Since $\nor{\Psi_T\,g}{H^1}\leq C\,\nor{\Psi_{2T}\,g}{H^1}$, by virtue of (\ref{iiiresult}) it is sufficient to estimate $$\nor{\frac{d}{dt}\biggl(\Psi_{T}(t)\,\int_0^te^{a|t-x|}\,g(x)\,dx\biggr)}{L^2}$$
which is bounded by
\begin{equation}\label{pilas}
\nor{\frac{d\Psi_T}{dt}(t)\,\biggl(\int_0^te^{a|t-x|}\,g(x)\,dx\biggr)}{L^2}+\nor{\Psi_T(t)\,\frac{d}{dt}\biggl(\int_0^te^{a|t-x|}\,g(x)\,dx\biggr)}{L^2}.
\end{equation}
For the first term above we can apply (\ref{iresult}) and
\begin{align*}
\nor{\frac{d\Psi}{dt}(t/T)\;\int_0^Te^{a|t-x|}\,(\Psi_{2T}\,g)(x)\,dx}{L^2} \leq \dfrac{C(T+1)}{1+|a|}\,\nor{\Psi_{2T}\,g}{L^2} \leq \dfrac{C\,(T+1)T}{1+|a|}\,\nor{\frac{d}{dt}(\Psi_{2T}\,g)}{L^2},
\end{align*}
where, in the last inequality, it was used that
\begin{equation*}
\nora{\Psi_{2T}\,g}{L^2}{2}=\int_{-4T}^{4T}|\Psi_{2T}(t)\,g(t)|^2\,dt\leq C\,T\,\nora{\Psi_{2T}\,g}{L^{\infty}}{2} \leq C\,T\,\nor{\Psi_{2T}\,g}{L^2}\,\nor{\frac{d}{dt}(\Psi_{2T}\,g)}{L^2}.
\end{equation*}
For the second term from (\ref{pilas}) we used (\ref{iiresult}) with $\Psi_{2T}\,g$ instead $g$ because $g=\Psi_{2T}\,g$ on $[-T, T]$. This implies (\ref{chave7}) when $b=1$. The result (\ref{chave7}) is obtained interpolating the cases $b=0$ and $b=1$.
\end{proof}
The following Lemma plays a central role estimating the free term of the integral equation (\ref{integralequation}). This Lemma allows us to work in the usual $\mathcal{Y}_{s,1/2}$ space associated to the KdV equation.
\begin{lema}\label{chave}
Let $0< T\lesssim 1$ and $a\leq \alpha$. Then we have
\begin{align}
\nor{\Psi_{T}(\cdot)}{H_t^b}&\leq C (T^{1/2}+T^{1/2-b})\;\;\;\;\; \forall b\geq 0, \label{chave1} \\
\nor{\Psi_{T}(\cdot)e^{a|\cdot|}}{H_t^{1/2}}&\leq C\,e^{2 \alpha}, \label{chave2} \\
\nor{\Psi_{T}(\cdot)e^{a|\cdot|}}{L_t^{1}}&\leq C\,e^{2 \alpha}, \label{novosiete} \\
|(|t|\Psi_T(t)e^{a|t|})^{\wedge}(\tau)|&\leq \dfrac{C\,T^2}{1+(\tau^2+a^2)T^2}, \label{chavenew}
\end{align}
where $C=C_{\psi}=\max \Bigl\{\nor{\psi}{L^{\infty}}, \nor{\frac{d\psi}{dt}}{L^{\infty}}, \nor{\frac{d^2\psi}{dt^2}}{L^{\infty}}\Bigr\}$ is a constant depending on $\psi$.
\end{lema}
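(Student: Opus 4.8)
\emph{Plan.} All four estimates are elementary facts about the scalar function $\psi_T(t)=\psi(t/T)$ and its weighting by $e^{a|t|}$; the only genuine content is to keep every constant uniform in $a\le\alpha$, and for (\ref{chavenew}) to capture decay in $\tau$ and in $a$ simultaneously. Two observations are used throughout. Since $0<T\lesssim 1$, the support of $\psi_T$ lies in a fixed bounded interval $[-2T,2T]$, and on it $e^{a|t|}\le e^{2\alpha}$ whenever $a\le\alpha$ (because $a|t|\le 2aT\le 2\alpha$ when $a\ge0$, and $e^{a|t|}\le 1$ when $a\le0$). Moreover, for $j=0,1$ and $a\le\alpha$,
\[
\int_0^{2T}t^{\,j}e^{at}\,dt\ \lesssim_{j}\ e^{2\alpha}\,\frac{T^{\,j+1}}{1+|a|^{\,j+1}T^{\,j+1}},
\]
obtained by comparing the crude bound $\int_0^{2T}t^{\,j}\,dt\simeq T^{\,j+1}$ (used when $|a|T\lesssim 1$, or whenever $a\ge 0$, where it costs the factor $e^{2\alpha}$) with $\int_0^{\infty}t^{\,j}e^{at}\,dt=j!\,|a|^{-(j+1)}$ (used when $a<0$ and $|a|T\gtrsim 1$). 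These are the workhorse of the proof.

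The first and third bounds are immediate. For (\ref{chave1}): by Plancherel and the dilation identity $\widehat{\psi_T}(\tau)=T\,\widehat{\psi}(T\tau)$ it suffices to bound $\int(1+|\tau|^{2b})\,|T\widehat{\psi}(T\tau)|^2\,d\tau$; after the substitution $\sigma=T\tau$ the unweighted part equals $\nor{\psi_T}{L^2}^2=T\nor{\psi}{L^2}^2$ and the homogeneous part equals $T^{1-2b}\nor{\psi}{\dot H^{b}}^2$, both finite since $\psi\in C_0^{\infty}$, which gives the stated $T^{1/2}+T^{1/2-b}$. For (\ref{novosiete}): $\nor{\psi_T e^{a|\cdot|}}{L^1}\le\nor{\psi}{\infty}\int_{-2T}^{2T}e^{a|t|}\,dt\le 4T\,e^{2\alpha}\nor{\psi}{\infty}\le C\,e^{2\alpha}$, using $T\le1$. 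For (\ref{chave2}) I would split at $a=-1$ and in each regime interpolate $\nor{\,\cdot\,}{H^{1/2}}\le\nor{\,\cdot\,}{L^2}^{1/2}\nor{\,\cdot\,}{H^1}^{1/2}$ between the elementary $L^2$ and $H^1$ bounds for $\psi_T e^{a|\cdot|}$: when $a\ge -1$ one has $|a|\le\alpha$ and $e^{a|t|}\le e^{2\alpha}$ on the support, so $\nor{\,\cdot\,}{L^2}\lesssim e^{2\alpha}T^{1/2}$ and (differentiating, using $(\psi_T e^{a|\cdot|})'=\psi_T'e^{a|t|}+a\,\mathrm{sgn}(t)\,\psi_T e^{a|t|}$) $\nor{\,\cdot\,}{H^1}\lesssim_{\alpha}e^{2\alpha}T^{-1/2}$, giving $\lesssim_{\alpha}e^{2\alpha}$; when $a<-1$ one uses instead the decay $\int_0^{2T}e^{2at}\,dt\lesssim\min(T,|a|^{-1})$ in the displayed estimate, whereupon after interpolation every term collapses through the elementary inequality $\min(T,|a|^{-1})(|a|+T^{-1})\le 2$, giving $\lesssim C_\psi$.

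For (\ref{chavenew}) write $\phi(t)=|t|\,\psi_T(t)\,e^{a|t|}$, supported in $[-2T,2T]$, and prove two complementary bounds. Trivially $|\widehat{\phi}(\tau)|\le\nor{\phi}{L^1}=2\int_0^{2T}t\,\psi_T(t)e^{at}\,dt\lesssim_{\alpha,\psi}T^2/(1+a^2T^2)$ by the displayed estimate with $j=1$. For decay in $\tau$, integrate by parts twice: since $\psi_T$ is constant (namely $\psi(0)$) near $t=0$ — here I use $\psi\equiv 1$ on $[-1,1]$ — the pointwise derivative $\phi'$ is piecewise $C^1$ with a single jump $\phi'(0^+)-\phi'(0^-)=2\psi(0)$ at $t=0$, so as distributions $\phi''=2\psi(0)\,\delta_0+\{\phi''\}$ with $\{\phi''\}\in L^1$; expanding $\{\phi''\}$ in $\psi_T,\psi_T',\psi_T''$ and powers of $a$ and integrating each term with the displayed estimate ($j=0,1$) yields $\nor{\{\phi''\}}{L^1}\lesssim_{\alpha,\psi}1$. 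With the paper's normalization this says $\widehat{\phi}(\tau)=(2\pi i\tau)^{-2}\bigl(2\psi(0)+\widehat{\{\phi''\}}(\tau)\bigr)$, so $|\widehat{\phi}(\tau)|\lesssim_{\alpha,\psi}\tau^{-2}$. Finally the elementary case check $\min\!\bigl(\tfrac{T^2}{1+a^2T^2},\tfrac1{\tau^2}\bigr)\le\tfrac{2T^2}{1+(\tau^2+a^2)T^2}$ splices the two bounds into (\ref{chavenew}).

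I expect (\ref{chavenew}) to be the main obstacle: one must recognise that $\phi''$ is a point mass at $0$ (coming from the corner of $|t|e^{a|t|}$) plus an $L^1$ function, bound the $L^1$ part uniformly in $a\le\alpha$ via the slightly fussy estimates on $\int_0^{2T}t^{\,j}e^{at}\,dt$, and then glue the $\tau$–independent trivial bound to the twice–integrated–by–parts one. The same uniformity is precisely what forces the split at $a=-1$ in (\ref{chave2}) — for $a$ very negative one must exploit the decay of $e^{a|t|}$, not just $e^{a|t|}\le 1$. Everything else is bookkeeping. (As elsewhere in the paper, constants depending on the fixed parameter $\alpha\ge1$, through $e^{2\alpha}$ and powers of $\alpha$, are absorbed into $C$.)
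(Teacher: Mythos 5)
Your treatments of (\ref{chave1}), (\ref{chave2}) and (\ref{chavenew}) are sound. For (\ref{chave1}) you follow the same dilation/Plancherel computation as the paper. For (\ref{chave2}) you take a genuinely different route: the paper computes $\widehat{h}(\tau)$ for $h(t)=\Psi(t)e^{a|t|T}$ explicitly by two integrations by parts, obtaining $|\widehat{h}(\tau)|\leq (2|a|T+Ce^{2\alpha})/(1+(aT)^2+\tau^2)$ and reading off the $H^{1/2}$ bound on the Fourier side, whereas you interpolate $\nor{\cdot}{H^{1/2}}\leq\nor{\cdot}{L^2}^{1/2}\nor{\cdot}{H^1}^{1/2}$ between elementary physical-side bounds, splitting at $a=-1$. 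Your route is more elementary but loses a factor $(1+\alpha T)^{1/2}$ against the stated constant $C_\psi e^{2\alpha}$; since $\alpha$ is a fixed parameter and the downstream estimates track powers of $\alpha$ anyway, this is harmless (and for (\ref{chavenew}) with $a\in(0,\alpha]$ some $\alpha$-dependence in the constant is in fact unavoidable, e.g.\ at $\tau=0$, $T=1$, $a=\alpha$ — a point the paper sidesteps by citing \cite{CP}, where the symbol is nonpositive). Your proof of (\ref{chavenew}) — distributional second derivative equal to $2\delta_0$ plus an $L^1$ remainder, then splicing the $\nor{\phi}{L^1}$ bound with the $\tau^{-2}$ bound — is complete and is real added value, since the paper only refers to \cite{CP} here.

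The genuine gap is (\ref{novosiete}). You read it literally as $\int_{\mathbb{R}}|\Psi_T(t)e^{a|t|}|\,dt$ and dispose of it by the support bound, which indeed proves that literal statement. But the paper's own proof integrates $|\widehat{h}(\tau)|$ in $\tau$, and the only place (\ref{novosiete}) is used — the term $\nor{\widehat{\Theta_k}(\tau)}{L^1(d\tau)}$ in (\ref{linesttre}), needed to control the $L^2_kL^1_\tau$ part of the $Y^s$ norm in (\ref{linestuno}) — requires the bound $\nor{(\Psi_Te^{a|\cdot|})^{\wedge}}{L^1(d\tau)}\leq Ce^{2\alpha}$, i.e.\ an $L^1$ bound in the \emph{frequency} variable. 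That is not implied by the support estimate; it needs the pointwise decay $|\widehat{h}(\tau)|\lesssim |a|T/((aT)^2+\tau^2)+e^{2\alpha}/(1+\tau^2)$ coming from the same two integrations by parts used for (\ref{chave2}) (note the first term integrates to an absolute constant uniformly in $a$, which is why no power of $|a|$ survives). Your argument for (\ref{chavenew}) already contains exactly this mechanism, so the repair is short, but as written the estimate that the rest of Section 3 actually consumes is not established.
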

\begin{proof}[Proof]
It's clear that
\begin{equation}\label{agodp}
\nora{\Psi_T}{L^2}{2}=\int_{\mathbb{R}}\Bigl|\Psi\Bigl(\frac{t}{T}\Bigr)\Bigr|^2\,dt =\int_{\mathbb{R}}T|\Psi(t)|^2\,dt = T\,\nora{\Psi}{L^2}{2}.
\end{equation}
By the definition of the space $H^b$, we have
\begin{align}
\nor{\Psi_T}{H^b_t}\leq C\,\nor{\Psi_T}{L^2}+C\,\nor{D_t^b\Psi_T}{L^2}=C\,T^{1/2}\nor{\Psi}{L^2}+C\,T^{1/2-b}\nor{D_t^b\Psi}{L^2}, \label{chave3}
\end{align}
where we have used the fact
\begin{align}
\nora{D_t^b\Psi_T}{L^2}{2} &= \int_{\mathbb{R}}|\tau |^{2b}|T\widehat{\Psi}(T\tau)|^2\,d\tau = T^{1-2b}\nora{D_t^b\Psi}{L^2}{2}. \notag \\
\intertext{Since $\nor{\Psi}{L^2}$ and $\nor{D_t^b\Psi}{L^2}$ are bounded by a constant because of the form of the function $\Psi$, then from (\ref{chave3}) we obtain (\ref{chave1}). We call $h(t)=\Psi(t)\,e^{a|t|T}$, and so $h_T(t)=\Psi_T(t)\,e^{a|t|}$, to get like in (\ref{chave3}):}
\nor{\Psi_T(\cdot)e^{a|\cdot|}}{H_t^{1/2}}&= \nor{h_T}{H_t^{1/2}}\leq C\,T^{1/2}\nor{h}{L^2}+C\,\nor{D_t^{1/2}h}{L^2}. 
\label{chave4} \\
\intertext{We know that}
\nora{h}{L^2}{2}&=\int_{-2T}^{2T}|\Psi(t)|^2\,e^{2 a |t|T}\,dt \leq 4T\,e^{4\alpha T^2}\nora{\Psi}{L^{\infty}}{2}. \label{novouno}
\end{align}
To bounded the term $\nor{D_t^{1/2}h}{L^2}$ we are going to explore $\widehat{h}(\tau)$ integrating by parts two times,
\begin{align}
\widehat{h}(\tau)&=\int_0^{+\infty}\Psi(t)e^{aTt}e^{-it\tau}\,dt + \int_{-\infty}^0\Psi(t)e^{-aTt}e^{-it\tau}\,dt \label{novoprima}  \\
&=\frac{-1}{aT-i\tau}\,\biggl(1+\int_0^{+\infty}\frac{d\Psi}{dt}(t)\,e^{t(aT-i\tau)}\,dt\biggr) -\frac{1}{aT+i\tau}\,\biggl(1-\int_{-\infty}^0\frac{d\Psi}{dt}(t)\,e^{-t(aT+i\tau)}\,dt\biggr) \notag \\
&=\frac{-2aT}{(aT)^2+\tau^2}+\frac{1}{(aT-i\tau)^2}\int_0^{+\infty}\frac{d^2\Psi}{dt^2}(t)\,e^{t(aT-i\tau)}\,dt +\frac{1}{(aT+i\tau)^2}\int_{-\infty}^0\frac{d^2\Psi}{dt^2}(t)\,e^{-t(aT+i\tau)}\,dt . \notag 
\end{align}
From this we have that
\begin{align}
|\widehat{h}(\tau)|&\leq \frac{2|a|T}{(aT)^2+\tau^2}+\frac{2(2T)\,e^{2\alpha T^2}\nor{\frac{d^2\Psi}{dt^2}}{L^{\infty}}}{(aT)^2+\tau^2}, \label{novotres} \\
\intertext{and, from (\ref{novoprima})}
|\widehat{h}(\tau)|&\leq 4T\,e^{2\alpha T^2}\nor{\Psi}{L^{\infty}}\leq 4\,e^{2 \alpha}\nor{\Psi}{L^{\infty}}=C_1\,e^{2 \alpha}. \label{novoquatro} \\
\intertext{Hence, with $C_0\,e^{2 \alpha} = 4\,e^{2\alpha }\nor{\frac{d^2\Psi}{dt^2}}{L^{\infty}}\geq 4T\,e^{2\alpha T^2}\nor{\frac{d^2\Psi}{dt^2}}{L^{\infty}}$, from (\ref{novotres}) and (\ref{novoquatro}), we obtain that}
|\widehat{h}(\tau)|&\leq \frac{2|a|T +Ce^{2\alpha} }{1+(aT)^2+\tau^2}, \label{novocinco}
\end{align}
where $C=C_0+C_1$. Multiplying by $|\tau|^{1/2}$ in (\ref{novocinco}), taking square and integrating on $\mathbb{R}$, we have that
\begin{align}
\nora{D_t^{1/2}h}{L^2}{2}&=\nora{|\tau|^{1/2}\widehat{h}(\tau)}{L^2}{2}\lesssim 4a^2T^2\int_{\mathbb{R}}\frac{|\tau|}{(1+a^2T^2+\tau^2)^2}\,d\tau + Ce^{4\alpha}\int_{\mathbb{R}}\frac{|\tau|}{(1+a^2T^2+\tau^2)^2}\,d\tau \notag \\
&\lesssim 4a^2T^2\int_{\mathbb{R}}\frac{|\tau|}{(a^2T^2+\tau^2)^2}\,d\tau + Ce^{4 \alpha}\int_{\mathbb{R}}\frac{|\tau|}{(1+\tau^2)^2}\,d\tau \notag \\
&\lesssim 4+Ce^{4\alpha} \leq C\,e^{4\alpha}, \label{novoseis}
\end{align}
where in the second inequality we used $\tau=|a|T x$. From (\ref{chave4}), (\ref{novouno}), (\ref{novoseis}) and since $T\leq 1$, we conclude (\ref{chave2}). Integrating on $\mathbb{R}$ the next inequality which is consequence of (\ref{novocinco})
\begin{equation*}
|\widehat{h}(\tau)|\lesssim  \frac{|a|T}{(aT)^2+\tau^2}+ \frac{Ce^{2\alpha}}{1+\tau^2}.
\end{equation*}
we have proved (\ref{novosiete}). The proof of (\ref{chavenew}) is equal to that of $(2.6)$ in the Lemma $2.3$ in \cite{CP}.
\end{proof}

\setcounter{equation}{0}
\section{\textsc{Linear Estimates}}

Here we study the linear operator $\Psi\,V_{\lambda}$ as well as the linear operator $M_{\lambda}$ defined as
\begin{equation}
M_{\lambda}:f \longmapsto \Psi(t)\,\int_0^tV_{\lambda}(t-t')\,f(t')\,dt'. \label{linearoperator}
\end{equation}
\subsection{\textsc{ Linear Estimates for the Free Term in $Y^s$}}
The next proposition gives a bounded to the free term of the integral equation (\ref{integralequation}).
\begin{lema}
\begin{equation}
\nor{\Psi(t)\,V_{\lambda}(t)\,\phi}{Y^s}\lesssim \nor{\phi}{H^s}.
\label{linestuno}
\end{equation}
\end{lema}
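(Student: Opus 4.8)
The plan is to compute the space--time Fourier transform of $\Psi(t)\,V_\lambda(t)\,\phi$ in closed form and then reduce each of the two pieces of the $Y^s$ norm to the one--variable estimates of Lemma~\ref{chave}. Using the factorization $V_\lambda(t)=U_\lambda(t)\,\widetilde{U_\lambda}(t)$ together with $U_\lambda(-t)\,U_\lambda(t)=I$, one has $U_\lambda(-t)\,\Psi(t)\,V_\lambda(t)\,\phi=\Psi(t)\,\widetilde{U_\lambda}(t)\,\phi$, whose time--Fourier transform factors, for each fixed $k\in\mathbb{Z}/\lambda$, as $\widehat{\phi}(k)\,\widehat{g_k}(\tau)$, where
\[
g_k(t):=\Psi(t)\,e^{\eta\Phi(k)|t|};
\]
equivalently, $(\Psi(t)\,V_\lambda(t)\,\phi)^{\wedge}(k,\tau)=\widehat{\phi}(k)\,\widehat{g_k}(\tau-4\pi^2k^3)$. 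The key observation is that $g_k$ is precisely the function $\Psi_T(\cdot)\,e^{a|\cdot|}$ of Lemma~\ref{chave} with $T=1$ and $a=\eta\,\Phi(k)\le \eta\alpha$, so that Lemma~\ref{chave} applies to $g_k$ uniformly in $k$ (with the constant there taken for $\max\{1,\eta\alpha\}$ in place of $\alpha$).

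For the $\mathcal{Y}_{s,1/2}$ part I would expand, using the translation invariance in $\tau$ of the inner integral,
\[
\nor{\Psi\,V_\lambda\,\phi}{\mathcal{Y}_{s,1/2}}^2=\int\langle k\rangle^{2s}\,|\widehat{\phi}(k)|^2\Bigl(\int_{\mathbb{R}}\langle\tau\rangle\,|\widehat{g_k}(\tau)|^2\,d\tau\Bigr)(dk)_\lambda=\int\langle k\rangle^{2s}\,|\widehat{\phi}(k)|^2\,\nor{g_k}{H^{1/2}_t}^2\,(dk)_\lambda,
\]
and then bound $\nor{g_k}{H^{1/2}_t}\le C$ uniformly in $k$ by (\ref{chave2}); Plancherel in the variable $k$ then gives $\nor{\Psi\,V_\lambda\,\phi}{\mathcal{Y}_{s,1/2}}\lesssim\nor{\phi}{H^s}$. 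For the second summand of the $Y^s$ norm, translation invariance in $\tau$ gives $\int_{\mathbb{R}}|(\Psi\,V_\lambda\,\phi)^{\wedge}(k,\tau)|\,d\tau=|\widehat{\phi}(k)|\,\nor{\widehat{g_k}}{L^1_\tau}$, so it suffices to bound $\nor{\widehat{g_k}}{L^1_\tau}$ uniformly in $k$; this follows by integrating in $\tau$ the pointwise estimate (\ref{novocinco}) from the proof of Lemma~\ref{chave}, whose two terms integrate to $O(1)$ and $O(e^{2\eta\alpha})$ respectively. A second application of Plancherel in $k$ bounds this summand by $\nor{\phi}{H^s}$ as well, and adding the two contributions yields (\ref{linestuno}).

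The proof is essentially bookkeeping once the Fourier transform of $\Psi\,V_\lambda\,\phi$ has been written down, so I do not expect a genuine obstacle. The one mildly delicate point is the second summand of the $Y^s$ norm: Lemma~\ref{chave} as stated controls $\nor{g_k}{L^1_t}$ rather than $\nor{\widehat{g_k}}{L^1_\tau}$, so one should go back to the pointwise bound (\ref{novocinco}) obtained in its proof rather than quoting (\ref{novosiete}) directly.
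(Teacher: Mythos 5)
Your argument is correct and is essentially the paper's own proof: the paper computes $(\Psi\,V_\lambda\phi)^{\wedge}(k,\tau)=\widehat{\Theta_k}(\tau-4\pi^2k^3)\,\widehat{\phi}(k)$ with $\Theta_k$ equal to your $g_k$, factors the two pieces of the $Y^s$ norm exactly as you do, and invokes (\ref{chave2}) and (\ref{novosiete}). Your side remark is well taken but harmless: despite its $L^1_t$ notation, (\ref{novosiete}) is proved in the paper precisely by integrating (\ref{novocinco}) in $\tau$, i.e.\ it is the bound on $\nor{\widehat{g_k}}{L^1_\tau}$ that both you and the paper actually use.
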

\begin{proof}[Proof]
We denote $\Theta_k(t)=\Psi(t)\,e^{\eta \,\Phi(k)\,|t|}$. Then
\begin{equation}
(\Psi(t)\,V_{\lambda}(t)\,\phi)^{\wedge}(k,\tau)=\widehat{\Theta_k(t)} \ast (e^{-(2\pi ik)^3t})^{\wedge}(\tau)\,\widehat{\phi}(k) =\widehat{\Theta_k}(\tau-4\pi^2k^3)\,\widehat{\phi}(k). \label{linestdue}
\end{equation}
So, $\nora{\Psi(t)\,V_{\lambda}(t)\,\phi}{Y^s}{2}$
\begin{align}
&\leq \nora{\langle \tau-4\pi^2k^3\rangle^{1/2}\,\langle k\rangle^s\,\widehat{\Theta_k}(\tau-4\pi^2k^3)\,\widehat{\phi}(k)}{L^2((dk)_{\lambda})L^2(d\tau)}{2}+\nora{\langle k\rangle^s\,\widehat{\Theta_k}(\tau-4\pi^2k^3)\,\widehat{\phi}(k)}{L^2((dk)_{\lambda})L^1(d\tau)}{2} \notag \\
&=\Bigl(\nora{\langle\tau\rangle^{1/2}\,\widehat{\Theta_k}(\tau)}{L^2(d\tau)}{2}+ \nora{\widehat{\Theta_k}(\tau)}{L^1(d\tau)}{2}\Bigr)\,\nora{\langle k\rangle^s\,\widehat{\phi}(k)}{L^2((dk)_{\lambda})}{2}. \label{linesttre}
\end{align}
(\ref{linesttre}) with (\ref{chave2}) and (\ref{novosiete}) imply (\ref{linestuno}).
\end{proof}
\subsection{\textsc{ Linear Estimates for the Forcing Term in $Y^s$}}

\begin{lema}
\begin{equation}
\nor{\Psi(t)\int_0^tV_{\lambda}(t-t')\,F(t')\,dt'}{Y^s}\lesssim \nor{F}{Z^s}. \label{linestdos}
\end{equation}
\end{lema}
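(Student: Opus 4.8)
The plan is to reduce the estimate for $M_\lambda F = \Psi(t)\int_0^t V_\lambda(t-t')F(t')\,dt'$ to the corresponding known estimate for the pure KdV semigroup $U_\lambda$ on the $Z^s \to Y^s$ scale, by peeling off the dissipative factor $\widetilde{U_\lambda}(t)$ via the splitting $V_\lambda(t) = U_\lambda(t)\widetilde{U_\lambda}(t)$ recorded just after \eqref{linearsol}. Concretely, using \eqref{inhomsoldos} I would write, on the Fourier side in $x$,
\begin{equation*}
\bigl(M_\lambda F\bigr)^{\wedge}(k,t) = \Psi(t)\, e^{2\pi i(4\pi^2 k^3)t}\, e^{\eta\Phi(k)|t|}\int_0^t e^{[2\pi i(\tau-4\pi^2k^3) - \eta(t)\Phi(k)]t'}\,\widehat F(k,\tau)\,dt'\ \text{(integrated in }\tau\text{)}.
\end{equation*}
Conjugating by $U_\lambda(-t)$ removes the oscillatory factor $e^{2\pi i (4\pi^2 k^3) t}$, so it suffices to control, for each fixed $k$, the time function $G_k(t) = \Psi(t)\, e^{\eta\Phi(k)|t|}\int_0^t e^{-\eta(t)\Phi(k)t'} g_k(t')\,dt'$ in the appropriate $t$-norm, where $g_k(t')$ is the inverse time-Fourier transform of $\widehat F(k,\tau)$ against $e^{2\pi i(\tau-4\pi^2k^3)t'}$, i.e.\ essentially $(U_\lambda(-t')F)^{\wedge}(k,\cdot)$ evaluated suitably. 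Setting $a = \eta\Phi(k) \le \eta\alpha$, this integral is exactly of the convolution-against-$e^{a|t-x|}$ type handled in Lemma~\ref{lema} and Proposition~\ref{fermin}, after noting $e^{\eta\Phi(k)|t|}e^{-\eta(t)\Phi(k)t'} = e^{a(|t|-t')}$ and, for $t'$ between $0$ and $t$, $|t|-t' = |t - t'|\,\mathrm{sgn}(t)\cdot(\dots)$ — more carefully $a(|t|-t') = a|t-t'|$ when $0\le t'\le t$ and $a(|t|-t')=a(-t-t')$ when $t\le t'\le 0$, which in both cases is $a|t-x|$ with $x=t'$ up to the sign bookkeeping already done in \eqref{arie}.

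The key steps in order: (1) Reduce via the $U_\lambda$/$\widetilde{U_\lambda}$ splitting and Plancherel in $k$ to a family of one-dimensional-in-$t$ estimates indexed by $k\in\mathbb Z/\lambda$, with decay parameter $a = \eta\Phi(k)$. (2) Split the $Y^s$ norm into its two pieces: the $\mathcal Y_{s,1/2}$ piece, which after conjugation is $\|\langle\tau\rangle^{1/2}\widehat{G_k}(\tau)\|_{L^2_\tau}$ weighted by $\langle k\rangle^s$, i.e.\ an $H^{1/2}_t$ norm of $G_k$; and the $L^2((dk)_\lambda)L^1(d\tau)$ piece, i.e.\ an $L^1_\tau$ control of $\widehat{G_k}$, which by Cauchy–Schwarz is dominated by the same $H^{1/2+}_t$-type norm (here one uses that the relevant symbols decay like $\langle\tau - 4\pi^2 k^3\rangle^{-1}$, matching the $Z^s$ norm's second component). (3) For the $H^{b}_t$ estimate with $b\in\{0,1\}$ (then interpolate to $b=1/2$), apply Proposition~\ref{fermin} with $\alpha_2$ chosen so that $e^{\alpha_2|t-x|}$ recovers exactly the kernel appearing in the $\mathcal Y_{s,-1/2}$ / $Z^s$ description of $F$, reducing $\|M_\lambda F\|$ to $C(1+T)\|\,\cdot\,\|$ of the "one-dissipation-factor" object, which is precisely what the $Z^s$ norm of $F$ bounds. (4) Handle the $L^1_\tau$ component of $Y^s$ in parallel using the same kernel decay together with \eqref{novosiete}-type bounds, or simply by Cauchy–Schwarz from the $\langle\tau\rangle^{1/2}$ bound as in \eqref{linesttre}. (5) Reassemble: square, integrate $(dk)_\lambda$, and use $\langle k\rangle^s \|\widehat{G_k}\|_{\text{time}} \lesssim$ (the $k$-slice of $\|F\|_{Z^s}$), uniformly in $k$ because the constants from Lemma~\ref{lema} and Proposition~\ref{fermin} are $k$-independent (they depend only on $\Psi$ and on $T\lesssim 1$, and the dissipation can only help since $a\le\eta\alpha$ and the bounds improve as $|a|$ grows).

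The main obstacle I expect is bookkeeping the two regimes $t'\in[0,t]$ with $t>0$ versus $t<0$ so that the integral genuinely has the form $\int_0^t e^{a|t-t'|}(\cdots)\,dt'$ to which Proposition~\ref{fermin} applies — the sign function $\eta(t)=\eta\,\mathrm{sgn}(t)$ in \eqref{inhomsoldos} was inserted precisely to make this work, and verifying that $a(|t|-t') = a|t-t'|$ in the right range is the crux (this is the content of the identity \eqref{arie}, which already absorbs the $\mathrm{sgn}$ factors via integration by parts). A secondary technical point is making the conjugation-by-$U_\lambda(-t)$ rigorous at the level of the $L^2((dk)_\lambda)L^1(d\tau)$ piece of the $Y^s$ norm, since that component is not purely a weighted $L^2$ and one must check the manipulations (translation $\tau \mapsto \tau - 4\pi^2k^3$, and the convolution in \eqref{linestdue}) are legitimate there; but this is routine, mirroring exactly the computation \eqref{linestdue}-\eqref{linesttre} in the proof of \eqref{linestuno}. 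Finally, one should double-check that no $\lambda$-dependent constant creeps in: all the time-variable lemmas are stated on $\mathbb R$ with $\lambda$-free constants, and the $k$-sum is handled by Plancherel, so the bound is genuinely uniform, which is what is needed for the later scaling argument.
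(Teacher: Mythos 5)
Your proposal takes a genuinely different route from the paper, and as written it has two gaps that I do not think can be repaired without importing the paper's actual mechanism. The paper does not conjugate by $U_{\lambda}$ and run the one-dimensional Duhamel lemmas; instead it uses the Colliander--Keel--Staffilani--Takaoka--Tao identity $\chi_{[0,t]}(t')=\tfrac12\,(a(t')+a(t-t'))$ with $a(t)=\mathrm{sgn}(t)\,b(t)$, valid on the support of the (cut-off) data. This splits $M_{\lambda}F$ into (i) $\Psi\,V_{\lambda}(t)$ applied to the fixed profile $\int a(t')V_{\lambda}(-t')F(t')\,dt'$, handled by \eqref{linestuno}, and (ii) a space-time Fourier multiplier with symbol $\widehat{\tilde a_k}(\tau-4\pi^2k^3)$ satisfying $|\widehat{\tilde a_k}(\tau)|\lesssim \langle\tau\rangle^{-1}$. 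That pointwise $\langle\tau-4\pi^2k^3\rangle^{-1}$ decay is exactly what sends each component of $Z^s$ to the corresponding component of $Y^s$, including the $L^2((dk)_{\lambda})L^1(d\tau)$ pieces.

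The first gap in your plan is step (3): Lemma \ref{lema} and Proposition \ref{fermin} do not gain a derivative in $\tau$. The interpolated bound they yield is of the form $\|\Psi_T\int_0^te^{a|t-\cdot|}g\|_{H^{1/2}}\lesssim \|g\|_{H^{1/2}}$ (or a comparison between two Duhamel integrals with different decay rates); what you need is $H^{-1/2}\to H^{1/2}$, a gain of one full derivative. The tool that actually achieves this in the paper is Lemma \ref{keylema}/Lemma \ref{chave6}, which requires the delicate kernel analysis with Schur's lemma and is reserved for Proposition \ref{forcingterm}; Proposition \ref{fermin} alone cannot "recover the kernel appearing in the $Z^s$ description of $F$" because $Z^s$ is just a weighted norm of $\widehat F$, with no kernel attached. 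The second and more fundamental gap is step (4): the $L^2((dk)_{\lambda})L^1(d\tau)$ component of $Y^s$ is \emph{not} controlled by the $\mathcal{Y}_{s,1/2}$ component via Cauchy--Schwarz — that would require $\langle\tau\rangle^{-1/2-\epsilon}\in L^2_{\tau}$ paired against an $H^{1/2+\epsilon}_t$ bound you do not have, and the failure of this endpoint embedding is precisely why the spaces $Y^s$, $Z^s$ were introduced in the first place. In \eqref{linesttre} the $L^1(d\tau)$ piece is estimated by a separate, genuine $L^1_{\tau}$ bound on the multiplier (\eqref{novosiete}), not by Cauchy--Schwarz. Without the $\chi_{[0,t]}$ decomposition (or some substitute producing pointwise $\langle\tau-4\pi^2k^3\rangle^{-1}$ decay of an explicit multiplier), your scheme has no way to produce the $L^1(d\tau)$ half of the $Y^s$ norm from the $Z^s$ norm of $F$.
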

\begin{proof}[Proof]
By applying a smooth cutoff function, we may assume that $F$ is supported on $\mathbb{T}\times [-3,3]$. Let $a(t)=sgn(t) b(t)$, where $b$ is a smooth bump function supported on $[-10,10]$ which equals $1$ on $[-5,5]$. The identity
$$\chi_{[0,t]}(t')=\frac{1}{2}\,(a(t')+a(t-t')),$$
valid for $t\in [-2,2]$ and $t'\in [-3,3]$, allows us to rewrite
\begin{align}
\Psi(t)&\int_0^tV_{\lambda}(t-t')\,F(t')\,dt'=\Psi(t)\int_{\mathbb{R}}\chi_{[0,t]}(t')\,V_{\lambda}(t-t')\,F(t')\,dt' \notag \\
&=\frac{1}{2}\,\Psi(t)\,V_{\lambda}(t)\int_{\mathbb{R}}a(t')\,V_{\lambda}(-t')\,F(t')\,dt'+\frac{1}{2}\,\Psi(t)\int_{\mathbb{R}}a(t-t')V_{\lambda}(t-t')\,F(t')\,dt'. \label{linestone}
\end{align}
We consider the contribution of each one of the addend of (\ref{linestone}). We denote $\tilde{a}_k(t')=a(t')\,e^{\eta \Phi(k)|t'|}$ and we use (\ref{linestuno}) to obtain
\begin{align}
\nor{\Psi(t)\,V_{\lambda}(t)\int_{\mathbb{R}}a(t')\,V_{\lambda}(-t')\,F(t')\,dt'}{Y^s}&\leq \nor{\int_{\mathbb{R}}a(t')\,V_{\lambda}(-t')\,F(t')\,dt'}{H^s} \notag \\
&=\nor{\langle k\rangle^s\int_{\mathbb{R}}e^{-2\pi i(4\pi^2 k^3)t'}\,\tilde{a}_k(t')\,\widehat{F}(k,t')\,dt'}{L^2((dk)_{\lambda})} \notag \\
&=\nor{\langle k\rangle^s\,\int_{\mathbb{R}}\widehat{\tilde{a}_k(t')}(4\pi^2k^3-\tau)\,\widehat{F}(k,\tau)\,d\tau}{L^2((dk)_{\lambda})} .\label{linesttwo}
\end{align}
As in the proof of \eqref{novocinco}, integrating by part twice we obtain
  $$|\widehat{\tilde{a}_k}(\tau)|\leq \dfrac{ C(\eta,\alpha)(1+ |\tau|)}{1+\eta^2 \Phi(k)^2+\tau^2}  \, \le \dfrac{C(\eta,\alpha)}{1+ |\tau|} .$$
We have from (\ref{linesttwo}) that
\begin{equation}
\nor{\Psi(t)\,V_{\lambda}(t)\int_{\mathbb{R}}a(t')\,V_{\lambda}(-t')\,F(t')\,dt'}{Y^s}\leq
C(\eta,\alpha)\,\nor{\int_{\mathbb{R}}\frac{\langle k\rangle^s\,\widehat{F}(k,\tau)}{\langle \tau-4\pi^2k^3\rangle}\,d\tau}{L^2((dk)_{\lambda})} \label{linestthree}
\end{equation}
The contribution of the second term in (\ref{linestone}) is calculated using that multiplication by $\Psi(t)$ is a bounded operation on the space $Y^s$ (See Remark \ref{notaciondepsi}) and note that the space-time Fourier transform of $\int_{\mathbb{R}}a(t-t')\,V_{\lambda}(t-t')\,F(t')\,dt'$ is given by
\begin{align}
\Bigl(\int_{\mathbb{R}}a(t-t')\,V_{\lambda}(t-t')\,F(t')\,dt'\Bigr)^{\wedge}(k,\tau)&=\Bigl(\int_{\mathbb{R}}a(t-t')\,e^{-(2\pi ik)^3(t-t')+\eta\,\Phi(k)\,|t-t'|}\,\widehat{F}(k,t')\,dt'\Bigr)^{\wedge}(\tau) \notag \\
=\Bigl(\tilde{a}_k(\cdot)\,e^{2\pi i(4\pi^2k^3)(\cdot)}\ast \widehat{F}(k,\cdot)\,(t)\Bigr)^{\wedge}(\tau) &=\widehat{\tilde{a}_k}(\tau-4\pi^2k^3)\,\widehat{F}(k,\tau) \label{linestfour}.
\end{align}
From the definitions (\ref{normys}), (\ref{normzs}) and from the estimate for $\widehat{\tilde{a}}$ used above we have:
\begin{align}
&\nor{\int_{\mathbb{R}}a(t-t')\,V_{\lambda}(t-t')\,F(t')\,dt'}{Y^s}\notag \\
&=\nor{\langle \tau-4\pi^2k^3\rangle^{1/2}\,\langle k\rangle^s\,\widehat{\tilde{a}_k}(\tau-4\pi^2k^3)\,\widehat{F}(k,\tau)}{L^2((dk)_{\lambda})L^2(d\tau)}+ \nor{\langle k\rangle^s\,\widehat{\tilde{a}_k}(\tau-4\pi^2k^3)\,\widehat{F}(k,\tau)}{L^2((dk)_{\lambda})L^1(d\tau)} \notag \\
&\leq C(\eta ,\alpha)\,\nor{\langle \tau-4\pi^2k^3\rangle^{-1/2}\,\langle k\rangle^s\,\widehat{F}(k,\tau)}{L^2((dk)_{\lambda})L^2(d\tau)}+ C(\eta ,\alpha)\,\nor{\frac{\langle k\rangle^s\,\widehat{F}(k,\tau)}{\langle \tau -4\pi^2k^3\rangle}}{L^2((dk)_{\lambda})L^1(d\tau)}. \label{linestfive}
\end{align}
(\ref{linestthree}) and (\ref{linestfive}) give (\ref{linestdos}).
\end{proof}
\subsection{\textsc{Linear Estimates for the Forcing Term in $\mathcal{Y}_{s,1/2}$}}
\begin{prop}\label{forcingterm}
Let $T\in [\frac{\sqrt{2}}{\sqrt{\beta}},\frac{1}{2}]$, $s\in \mathbb{R}$ and $\beta>8$. Then,
\begin{align}
\nor{\Psi_T(t)\int_0^t V_{\lambda}(t-t')\,F(t')\,dt'}{\mathcal{Y}_{s,1/2}}\leq C\,\eta\,\alpha\,(\beta+(\eta\,\alpha)^2)\,e^{2\eta\,\alpha}\,T^{1/2}\,\nor{F}{\mathcal{Y}_{s,-1/2}}. \label{cotapartenaolineal}
\end{align}
where $C$ is a constant.
\end{prop}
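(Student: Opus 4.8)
The plan is to reduce the estimate in $\mathcal{Y}_{s,1/2}$ to the elementary estimates of Lemma~\ref{chave} and Proposition~\ref{fermin} by working on the Fourier side. First I would compute the space-time Fourier transform of $v(x,t)=\Psi_T(t)\int_0^tV_{\lambda}(t-t')F(t')\,dt'$. Starting from the Duhamel representation~\eqref{inhomsoldos} and multiplying by the cutoff, one sees that, after conjugating by the free KdV group (which is the content of the identity $\nor{u}{\mathcal{Y}_{s,b}}=\nor{\langle\tau\rangle^b\langle k\rangle^s(U_\lambda(-t)u)^\wedge}{L^2L^2}$ in~\eqref{normakdv}), the relevant quantity is, for each fixed $k$, a function of $\tau$ built from $\Psi_T(t)e^{\eta\Phi(k)|t|}$ and the kernel $\int_0^t e^{[\,\cdot\,]t'}\,dt'$. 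Writing $a_k=\eta\Phi(k)\le\eta\alpha$, the inner integral produces, modulo the oscillatory factor that is absorbed by the conjugation, an expression of the form $\int_0^t e^{a_k(t-t')}\widehat F(k,t')\,dt'$ (here one uses $\eta(t)\Phi(k)=\mathrm{sgn}(t)a_k$ together with the fact that on the support of $\Psi_T$ all times are small). This is precisely the object that Proposition~\ref{fermin} and Lemma~\ref{lema} are designed to control.

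The key steps, in order, are: (i) express $(U_\lambda(-t)v)^\wedge(k,\tau)$ as a convolution in $\tau$ of $\widehat{\Theta_{k,T}}$, where $\Theta_{k,T}(t)=\Psi_T(t)e^{a_k|t|}$ (or its relatives coming from the numerator $e^{[\cdots]t}-1$ in~\eqref{inhomsoldos}), against $(U_\lambda(-t)F)^\wedge(k,\cdot)$; (ii) use the bound~\eqref{chave2}, $\nor{\Psi_T e^{a_k|\cdot|}}{H^{1/2}}\le Ce^{2\alpha}$ — here applied with $a=a_k\le\eta\alpha$, so that the constant becomes $Ce^{2\eta\alpha}$ — together with~\eqref{novosiete} to control the $L^1_\tau$ piece, and~\eqref{chave1} to pick up the $T^{1/2}$ gain; (iii) invoke Proposition~\ref{fermin} (with $\alpha_1,\alpha_2$ playing the role of the two dissipation factors arising when one splits $a_k$, or more simply with a single negative exponent) to trade the $H^{1/2}$ norm in $t$ against the weighted $L^2_\tau$ norm that defines $\nor{F}{\mathcal{Y}_{s,-1/2}}$, which accounts for the loss of one power of $\langle\tau-4\pi^2k^3\rangle$; (iv) keep careful track of the $k$-dependence of all constants, using only $a_k\le\eta\alpha$ and $\beta>8$, $T\in[\sqrt2/\sqrt\beta,1/2]$, so that the polynomial prefactor $\eta\alpha(\beta+(\eta\alpha)^2)$ and the exponential $e^{2\eta\alpha}$ emerge; (v) reassemble in $(dk)_\lambda$ via Plancherel, which is harmless since all multipliers are independent of the $\tau$-convolution in a uniform way. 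The choice $T\ge\sqrt2/\sqrt\beta$ is what guarantees the denominators $1+a_k^2T^2$ etc.\ in~\eqref{novocinco} do not degenerate, and $\beta>8$ keeps $T\le1/2<1$ so Lemma~\ref{chave} applies.

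The main obstacle I expect is step (iii): matching the kernel that actually appears after performing the $t'$-integration in~\eqref{inhomsoldos}, namely $\dfrac{e^{[2\pi i(\tau-4\pi^2k^3)-\mathrm{sgn}(t)a_k]t}-1}{2\pi i(\tau-4\pi^2k^3)-\mathrm{sgn}(t)a_k}$, against the convolution-ready form $\int_0^te^{a_k|t-x|}f(x)\,dx$ that Proposition~\ref{fermin} handles. One must be careful that the "$-1$" in the numerator contributes a term supported (after multiplication by $\Psi_T$) near $t=0$, and that the sign function inside the exponent is exactly what makes $g(0)=0$ in Lemma~\ref{lema}~\eqref{iiresult} usable. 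A secondary nuisance is that $\mathcal{Y}_{s,1/2}$ is an $L^2_\tau$ space, so unlike the $Y^s$ estimates~\eqref{linestdos} one cannot afford any logarithmic loss; this is exactly why the factor $T^{1/2}$, extracted from~\eqref{chave1} with $b=1/2$ (giving $T^{1/2}+T^{0}$, and here $T\lesssim1$ forces the bound $\lesssim1$, with the genuine smallness $T^{1/2}$ surviving from the $L^2$-in-time normalization), is essential and must be tracked rather than discarded. Once the kernel identification is made cleanly, the rest is bookkeeping of constants.
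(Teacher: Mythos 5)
Your reduction step is the same as the paper's: expand the $\mathcal{Y}_{s,1/2}$ norm via the conjugated form in \eqref{normakdv}, fix $k$, and observe that after setting $a=\eta\Phi(k)\leq\eta\alpha$ and $f(t')=e^{-ik^3t'}\widehat{F}(k,t')$ the quantity to control is exactly $\nor{\Psi_T(\cdot)\,I_a(\cdot)}{H^{1/2}_t}$ with $I_a(t)=\int_0^te^{a|t-t'|}f(t')\,dt'$, after which one reassembles in $\ell^2_k$ by Plancherel. The gap is in what you propose to do with this one-dimensional object. The entire content of the proposition is the estimate $\nor{\Psi_TI_a}{H^{1/2}_t}\leq C\,\alpha\,(\beta+\alpha^2)\,e^{2\alpha}\,T^{1/2}\,\nor{f}{H^{-1/2}_t}$ uniformly in $a\leq\alpha$ (Lemma \ref{chave6}, resting on Lemma \ref{keylema}), and none of the tools you invoke can produce it. Proposition \ref{fermin} does not ``trade the $H^{1/2}$ norm in $t$ against the weighted $L^2_\tau$ norm defining $\nor{F}{\mathcal{Y}_{s,-1/2}}$'': it only compares $\nor{\Psi_T\int_0^te^{a|t-x|}f\,dx}{H^b}$ with the \emph{same} expression for a less negative exponent $\alpha_2$, i.e.\ it normalizes the size of the dissipation so that the case $a<-\alpha$ reduces to $|a|\leq\alpha$; its output is still an $H^b$ norm of a Duhamel integral, not an $H^{-1/2}$ norm of $f$. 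Likewise \eqref{chave2} and \eqref{novosiete} bound norms of $\Psi_Te^{a|\cdot|}$ itself, which is what the \emph{free} term $\Psi(t)V_{\lambda}(t)\phi$ requires (cf.\ \eqref{linesttre}), not the forcing term; and \eqref{chave1} with $b=1/2$ gives $C(T^{1/2}+1)$, which is bounded but not small, so the factor $T^{1/2}$ in \eqref{cotapartenaolineal} cannot come from there.

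What is missing is the endpoint inhomogeneous estimate from $H^{-1/2}_t$ into $H^{1/2}_t$ for the kernel $e^{a|t-t'|}$, with constants depending only on the upper bound $\alpha$ of $\Phi$ and with the explicit gain $T^{1/2}$. In the paper this is Lemma \ref{keylema}: one writes $I_a(t)=\int\widehat f(\tau)\,\frac{e^{i\tau t}-e^{a|t|}}{i\tau-\mathrm{sgn}(t)a}\,d\tau$, splits the symbol into $\mathrm{sgn}(t)p_a+iq_a$, separates $|t'|\gtrless 1/T$, further splits the high-frequency region at $|t'|=\beta|\tau|T$ (this is where $\beta>8$ and $T\geq\sqrt2/\sqrt\beta$ enter), controls the intermediate region by Schur's Lemma \ref{lemadeschur}, uses the Ginibre--Tsutsumi--Velo difference trick together with \eqref{chavenew} for $I_{a,11}$, and treats $|\tau|\leq1/2$ separately. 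You correctly flag the kernel-matching issue and the role of the ``$-1$'' in the numerator of \eqref{inhomsoldos} as the main obstacle, but the proposal stops exactly where the real work begins; as written it would at best recover the $L^2$-level bounds of Lemma \ref{lema}, which lose the half derivative needed to pass from $\mathcal{Y}_{s,-1/2}$ to $\mathcal{Y}_{s,1/2}$.
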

\begin{rem}
The Proposition \ref{forcingterm} together with the inequality (\ref{chave2}), which implies a linear estimate for the free term in $\mathcal{Y}_{s,1/2}$, that is, $\nor{\Psi(t)\,V_{\lambda}(t)\,\phi}{\mathcal{Y}_{s,1/2}}\leq C\,e^{2\alpha}\,\nor{\phi}{H^s}$, and the bilinear estimate from Kenig, Ponce and Vega given in the Proposition \ref{bilest} guarantee the local well-posedness result to the $\lambda$-periodic initial value problem (\ref{fivp}) in the Sobolev spaces $H^s(\mathbb{T})$ to $s>-1/2$ at least for small initial data.
\end{rem}
To prove this Proposition \ref{forcingterm} we need the next Lemmas:
\begin{lema}[Schur's Lemma]\label{lemadeschur}
Let $f$ be in $\mathcal{S}(\mathbb{R})$ and $L$ the integral operator, given by
\begin{align*}
(Lf)(x)=\int_{\mathbb{R}}N(x,y)\,f(y)\,dy
\end{align*}
where the kernel $N$ is such that
\begin{equation*}
\sup_x \; \int_{\mathbb{R}}|N(x,y)|\,dy \leq 1,\qquad \text{and} \qquad
\sup_y \; \int_{\mathbb{R}}|N(x,y)|\,dx \leq 1.
\end{equation*}
Then, $\nor{L}{L^2\to L^2}\leq 1$.
\end{lema}
\begin{proof}[Proof]
See the section $2.4.1$, page $284$ of \cite{S}.
\end{proof}
\begin{lema}\label{keylema}
Let $\frac{\sqrt{2}}{\sqrt{\beta}}\leq T\leq 1$, $\beta \geq 2$, $\alpha\geq 1$ and $|a|\leq \alpha$. Then
\begin{align}\label{thankgod}
\nor{\Psi_T(\cdot)\,I_a(\cdot)}{H_t^{1/2}} &\leq
C\,\alpha \,(\beta\,+\alpha^2)\,e^{2\alpha }\,T^{1/2}\,\nor{f}{H^{-1/2}},
\end{align}
where $I_a(t):= \int_0^t e^{a|t-t'|}f(t')\,dt'$.
\end{lema}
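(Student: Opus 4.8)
The plan is to reduce the estimate on $I_a(t)=\int_0^t e^{a|t-t'|}f(t')\,dt'$ to a Fourier-side analysis exactly as in the proof of Proposition~\ref{forcingterm}'s scalar precursor, and to exploit the cut-off $\Psi_T$ together with the lower bound $T\geq\sqrt{2}/\sqrt\beta$ (equivalently $\beta T^2\geq 2$) to absorb the loss coming from the possibly very small denominator $2\pi i(\tau-4\pi^2k^3)-a$. First I would split into the two regimes $|a|\leq 1$ and $1\leq|a|\leq\alpha$. In the first regime $a$ is harmless: the kernel $e^{a|t-t'|}$ is bounded on the support $|t'|,|t|\lesssim T\lesssim 1$ by $e^{2\alpha}$, so after writing $\widehat{\Psi_T I_a}$ as a convolution one gets, via Schur's Lemma (Lemma~\ref{lemadeschur}) applied to the kernel $\langle\tau\rangle^{1/2}\widehat{(\Psi_T e^{a|\cdot|})}(\tau-\sigma)\langle\sigma\rangle^{1/2}$, a bound of the form $C e^{2\alpha}\|\langle\sigma\rangle^{-1/2}\widehat{f}(\sigma)\|_{L^2}=Ce^{2\alpha}\|f\|_{H^{-1/2}}$; here the key input is \eqref{chave2}, $\|\Psi_T(\cdot)e^{a|\cdot|}\|_{H^{1/2}_t}\leq Ce^{2\alpha}$, and the elementary fact that $\langle\tau\rangle^{1/2}\lesssim\langle\tau-\sigma\rangle^{1/2}\langle\sigma\rangle^{1/2}$. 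The gain $T^{1/2}$ in this regime comes from the observation that $\Psi_T I_a$ is supported in $|t|\lesssim T$, so one extra factor of $T^{1/2}$ is available at the price of an $L^\infty_t$-type bound, just as in the string of inequalities ending \eqref{novoseis}.

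The substantive case is $1\leq|a|\leq\alpha$, where $a$ is genuinely negative and large; the idea is to integrate by parts (or equivalently use the identity $I_a(t)=\text{something}+ \frac1a(\cdots)$ as in \eqref{arie}) to trade the singular factor for a smooth one. Concretely, writing $g(t)=\Psi_{2T}(t)I_a(t)$ one has $g'(t)=\Psi_{2T}'I_a+\Psi_{2T}(f+a\,\mathrm{sgn}(t)I_a)$, so that $I_a$ itself satisfies a differentiated identity $a\,\mathrm{sgn}(t)I_a = g' - \Psi_{2T}'I_a - \Psi_{2T}f$ on $[-T,T]$, whence on that interval $I_a = \tfrac1{|a|}(\text{terms controlled by }\|f\|_{L^2}\text{ and }\|I_a\|_{L^2})$. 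Combined with the crude $L^2$ bound $\|\Psi_{2T}I_a\|_{L^2}\leq \tfrac{C(1+T)}{1+|a|}\|f\|_{L^2}$ from \eqref{iresult}, this yields $\|\Psi_T I_a\|_{L^2}\lesssim \tfrac{T}{|a|}\|f\|_{L^2}$ and, after differentiating once more and using \eqref{iiresult} (valid since the relevant function vanishes at $0$), a bound on $\|\Psi_T I_a\|_{H^1}$ of order $\tfrac{T}{|a|}\|f\|_{H^1}$, up to the polynomial-in-$a$ losses $(\beta+a^2)$ that enter when one differentiates the factor $\mathrm{sgn}(t)I_a$ and re-invokes the identity. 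Interpolating the $L^2$ and $H^1$ estimates gives the $H^{1/2}$ bound with the stated constant $C\alpha(\beta+\alpha^2)e^{2\alpha}T^{1/2}$, the exponential $e^{2\alpha}$ being carried along from \eqref{chave2}/\eqref{novosiete} and the restriction $T\geq\sqrt2/\sqrt\beta$ being exactly what converts a spurious $\beta^{-1}$ or $T^{-1}$ into an admissible constant.

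Alternatively — and this is perhaps cleaner to write — one can stay entirely on the Fourier side: $\widehat{\Psi_T I_a}(\tau)$ is a convolution of $\widehat{\Psi_T}$ (or of $\widehat{\Psi_T e^{a|\cdot|}}$, using the kernel representation $I_a(t)=\int_0^t e^{a|t-t'|}f(t')dt'$ and Fubini as in \eqref{causch}) with $\widehat f$, against the kernel $\frac{e^{(2\pi i\sigma - a)t}-1}{2\pi i\sigma - a}$-type multiplier; the denominator $|2\pi i\sigma-a|\geq|a|\geq1$ in this regime makes the multiplier bounded, and the numerator's time-localization to $|t|\lesssim T$ supplies the $T^{1/2}$. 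Schur's Lemma applied to the resulting kernel in $(\tau,\sigma)$, with the weights $\langle\tau\rangle^{1/2}$ and $\langle\sigma\rangle^{-1/2}$ distributed using $\langle\tau\rangle^{1/2}\lesssim\langle\tau-\sigma\rangle^{1/2}\langle\sigma\rangle^{1/2}$ and the decay estimate \eqref{chavenew} for $\langle\tau-\sigma\rangle^{1/2}\widehat{(|\cdot|\Psi_T e^{a|\cdot|})}(\tau-\sigma)$, closes the bound. The main obstacle, in either route, is bookkeeping: one must track how the constants $\beta$, $\alpha$, $\eta$ and $T$ propagate through the integration by parts and the interpolation so as to land on precisely $C\alpha(\beta+\alpha^2)e^{2\alpha}T^{1/2}$, and in particular one must use the hypothesis $\beta T^2\geq2$ at exactly the right place — when estimating the term where a factor $T^{-1}$ would otherwise appear from differentiating $\Psi_T$ — rather than earlier, where it would be wasteful. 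Once that accounting is done, the two limiting cases $b=0$ and $b=1$ feed into the interpolation in the style of Proposition~\ref{fermin}, and \eqref{thankgod} follows.
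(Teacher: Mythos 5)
There is a genuine gap, and it sits at the heart of your second paragraph. The estimate \eqref{thankgod} is a one-derivative \emph{smoothing} bound: the output is measured in $H^{1/2}_t$ while the input $f$ is only measured in $H^{-1/2}_t$. Your proposed endpoints are $\|\Psi_T I_a\|_{L^2}\lesssim \frac{T}{|a|}\|f\|_{L^2}$ (from \eqref{iresult}) and $\|\Psi_T I_a\|_{H^1}\lesssim \frac{T}{|a|}\|f\|_{H^1}$ (from \eqref{iiresult}); interpolating these gives $\|\Psi_T I_a\|_{H^{1/2}}\lesssim \|f\|_{H^{1/2}}$, which is one full derivative short of the claim. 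No amount of bookkeeping of $\alpha$, $\beta$, $T$ fixes this: to land on $\|f\|_{H^{-1/2}}$ you would need endpoints of the form $H^{-1}\to L^2$ and $L^2\to H^1$, i.e.\ a derivative gain at each endpoint, and Lemma 2.1 does not provide that. The same defect afflicts your first paragraph: writing $\widehat{\Psi_T I_a}$ as a convolution with $\widehat{\Psi_T e^{a|\cdot|}}$ ignores the $\chi_{[0,t]}$ cutoff in the Duhamel integral, and even granting it, the Schur kernel $\langle\tau\rangle^{1/2}\,\widehat{(\Psi_T e^{a|\cdot|})}(\tau-\sigma)\,\langle\sigma\rangle^{1/2}$ is not integrable in $\sigma$ uniformly in $\tau$ (after distributing the weights one is left with $\langle\sigma\rangle\,\langle\tau-\sigma\rangle^{-3/2}$, whose $\sigma$-integral diverges). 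The actual source of the derivative gain in the paper is the explicit representation $I_a(t)=\int\widehat f(t')\,\frac{e^{it't}-e^{a|t|}}{it'-\mathrm{sgn}(t)a}\,dt'$: the denominator contributes decay $\sim\langle t'\rangle^{-1}$ (split into $p_a$ and $q_a$), and this is what turns the $H^{-1/2}$ weight on $f$ into something summable.

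Your third paragraph is closer to the paper's actual route but still skips the crux. Saying that the denominator is bounded below and that the time localization ``supplies the $T^{1/2}$'' does not close the borderline case: the resulting Schur kernel is exactly critical (non-integrable) unless one restricts to the intermediate region $1/T<|t'|\le\beta|\tau|T$, which is where the paper truncates both Schur integrals (at $\beta|\tau|T$ in $t'$ and at $|t'|/\beta T$ in $\tau$) and uses \emph{two different} bounds on the kernel $K_2$ — one decaying like $|t'|/|\tau|$ and one like $|t'|^2/|\tau|^2$ — to obtain the finite constant $C\beta^{1/2}T^{1/2}$. This is also where the hypothesis $T\ge\sqrt2/\sqrt\beta$ genuinely enters (to ensure that region is nonempty and that $\{|t'|\ge\beta|\tau|T\}\subset\{|t'|\ge 2|\tau|/T\}$), not, as you suggest, in controlling a $T^{-1}$ from differentiating $\Psi_T$. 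Finally, the complementary region $|t'|\le 1/T$ requires its own argument (the Ginibre--Tsutsumi--Velo difference trick for $I_{a,11}$ together with the decay estimate \eqref{chavenew}, and a separate bound for $I_{a,12}$), none of which is supplied by your sketch. In short: the strategy of localizing in $t'$ relative to $1/T$ and $\beta|\tau|T$, integrating by parts twice in the kernel $K$, and running Schur's lemma on the truncated critical piece is the missing core of the proof.
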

\begin{proof}[Proof]
Rewrite $I_a(t)$ as in the proof of the Lemma 2.4 in \cite{CP}. By Fourier inverse transform, we have $I_a(t)= \int_{\mathbb{R}} \widehat{f}(\tau) \dfrac{e^{i\tau t}-e^{a|t|}}{i\tau - sgn(t)a}\,d\tau $. Since, $\dfrac{1}{sgn(t)a - i\tau } = sgn(t) p_a(\tau) + i q_a(\tau)$ where $p_a(\tau)=\dfrac{a}{a^2+\tau^2}$ and $q_a(t)=\dfrac{\tau}{a^2+\tau^2}$ then, replacing $\tau $ by $t'$, we obtain
\begin{align}
I_a(t) = sgn(t) \int_{\mathbb{R}} p_a(t')[e^{a|t|}-e^{it't}]\widehat{f}(t')\,dt' +i\int_{\mathbb{R}} q_a(t')[e^{a|t|}-e^{it't}]\widehat{f}(t')\,dt' := I_{a,1}(t)+ I_{a,2}(t).     \label{thankone}
\end{align}
\textbf{Estimate for $I_{a,1}$}. We write
\begin{align}
I_{a,1}(t)&=sgn(t) \int_{|t'|>1/T} p_a(t')[e^{a|t|}-e^{it't}]\widehat{f}(t')\,dt' + sgn(t) \int_{|t'|\leq 1/T} p_a(t')[e^{a|t|}-e^{it't}]\widehat{f}(t')\,dt' \notag \\
&:=I^>_{a,1}(t)+I^<_{a,1}(t)\label{thanktwo}
\end{align}

\textbf{Case 1:} $|t'|>1/T$. In this case $|t'|\backsimeq \langle t' \rangle$.
\begin{align}
\Psi_T(t)\,I^>_{a,1}(t)=\Psi_T(t)sgn(t) \int_{|t'|>1/T} p_a(t')[e^{a|t|}-e^{it't}]\widehat{f}(t')\,dt' = a h_T(t), \label{thankthree}
\end{align}
where $h_T(t)=h(t/T)$ and
\begin{align}
h(t)=\Psi(t)sgn(t) \int_{|t'|>1/T} \dfrac{\widehat{f}(t')}{a^2+(t')^2}[e^{aT|t|}-e^{iTt't}]\,dt' ,\label{thankfour}
\end{align}
\begin{align}
\widehat{h(t)}(\tau)=\int_{|t'|>1/T} \dfrac{\widehat{f}(t')}{a^2+(t')^2} K(a,T,\tau,t')\,dt' \label{thankfive}
\end{align}
with
\begin{equation}
K(a,T,\tau,t')=\int_{\mathbb{R}} sgn(t) \,\Psi(t)[e^{aT|t|}-e^{iTt't}]\,e^{-it\tau}\,dt. \label{thanksix}
\end{equation}
Integrating by parts, we have
\begin{align}
K(a,T,\tau,t')&=\int_{\mathbb{R}} sgn(t) \Psi(t)\,e^{aT|t|}\,e^{-it\tau}\,dt - \int_{\mathbb{R}} sgn(t) \Psi(t)\,e^{iTt't}\,e^{-it\tau}\,dt \notag \\
&=-\dfrac{2\,i}{\tau}-\dfrac{i}{\tau}\int_{\mathbb{R}}sgn(t)\Bigl(\dfrac{d}{dt}\Psi(t)+aT sgn(t)\,\Psi(t)\Bigr)\,e^{aT|t|}\,e^{-it \tau}\,dt + \notag \\
&\quad \,+\dfrac{2\,i}{\tau} +\dfrac{i}{\tau}\int_{\mathbb{R}}sgn(t)\,\Bigl(\dfrac{d}{dt}\Psi(t)+\,iTt'\,\Psi(t)\Bigr)\,e^{i(Tt'-\tau)t}\,dt  \notag \\
&=K_1(a,T,\tau)+K_2(T,\tau,t'), \label{quebraka}
\end{align}
where
\begin{align}
K_1(a,T,\tau)&=-\dfrac{i}{\tau}\int_{\mathbb{R}}sgn(t)\Bigl(\dfrac{d}{dt}\Psi(t)+aT sgn(t)\,\Psi(t)\Bigr)\,e^{aT|t|}\,e^{-it \tau}\,dt \notag \\
&=-\dfrac{1}{\tau^2}\int_{\mathbb{R}}sgn(t)\Bigl(\dfrac{d^2}{dt^2}\Psi(t)+2aT\,sgn(t)\,\dfrac{d}{dt}\Psi(t)+\,(aT)^2\,\Psi(t)\Bigr)e^{aT|t|}\,e^{-it \tau}\,dt ,\label{kasubuno} \\
|K_1(a,T,\tau)|&\leq \dfrac{1}{|\tau|^2}\int_{-2}^2\Bigl(\Bigl|\dfrac{d^2}{dt^2}\Psi(t)\Bigr|+2|a|T\,\Bigl|\dfrac{d}{dt}\Psi(t)\Bigr|+\,(|a|T)^2\,|\Psi(t)|\Bigr)e^{aT|t|}\,dt
\leq \dfrac{C\,(1+\alpha T )^2\,e^{2a}}{|\tau|^2}, \label{cotakasubuno} \\
\intertext{and}
K_2(T,\tau,t')&=\dfrac{i}{\tau}\int_{\mathbb{R}}sgn(t)\,\Bigl(\dfrac{d}{dt}\Psi(t)+\,iTt'\,\Psi(t)\Bigr)\,e^{i(Tt'-\tau)t}\,dt  \label{kasubdosone} \\
&=\dfrac{2iTt'}{\tau^2}-\dfrac{1}{\tau^2}\int_{\mathbb{R}}sgn(t)\,\Bigl(\dfrac{d^2\Psi(t)}{dt^2}+2iTt'\,\dfrac{d\Psi(t)}{dt}+(iTt')^2\,\Psi(t)\Bigr)\,e^{i(Tt'-\tau)t}\,dt  \label{kasubdostwo} \\
&=\dfrac{1}{\tau (Tt'-\tau)}\int_{\mathbb{R}}sgn(t)\,\,\dfrac{d^2\Psi(t)}{dt^2}\,e^{it(Tt'-\tau)}\,dt +\notag \\
&- \dfrac{iTt'}{\tau (Tt'-\tau)}\int_{\mathbb{R}}sgn(t)\,\,\dfrac{d\Psi(t)}{dt}\,e^{it(Tt'-\tau)}\,dt -\dfrac{2iTt'}{\tau (Tt'-\tau)}. \label{kasubdosthree}
\end{align}
Thus, from (\ref{kasubdosone}):
\begin{align}
|K_2(T,\tau,t')|&\leq C\,\dfrac{|t'|}{|\tau|},  \label{cotakasubdosone} \\
\intertext{from (\ref{kasubdostwo}):}
|K_2(T,\tau,t')|&\leq C\,\dfrac{|t'|^2}{|\tau|^2}, \label{cotakasubdostwo}
\end{align}
and from (\ref{kasubdosthree}):
\begin{align}
|K_2(T,\tau,t')|&\leq \dfrac{1}{|\tau (Tt'-\tau)|}\int_{-2}^2\Bigl|\dfrac{d^2\Psi(t)}{dt^2}\Bigr|\,dt + \dfrac{T|t'|}{|\tau (Tt'-\tau)|}\int_{-2}^2\Bigl|\dfrac{d\Psi(t)}{dt}\Bigr|\,dt +\dfrac{2T|t'|}{|\tau (Tt'-\tau)|} \notag \\
&\leq \dfrac{C\,T|t'|}{|\tau (Tt'-\tau)|}. \label{cotakasubdosthree}
\end{align}
In the inequalities above $C=C_{\Psi}=\max\{\nor{\Psi}{L^{\infty}}, \nor{\frac{d}{dt}\Psi}{L^{\infty}}, \nor{\frac{d^2}{dt^2}\Psi}{L^{\infty}}\}$. From (\ref{thankfive}), (\ref{quebraka}), (\ref{cotakasubuno}) and considering $|\tau|> 1/2$, which implies $\langle \tau \rangle \backsimeq |\tau|$, we have
\begin{align}
|&\widehat{h(t)}(\tau)|\leq \int_{|t'|>1/T} \dfrac{|\widehat{f}(t')|}{a^2+(t')^2} |K_1(a,T,\tau)|\,dt' + \int_{|t'|>1/T} \dfrac{|\widehat{f}(t')|}{a^2+(t')^2} |K_2(T,\tau,t')|\,dt' \notag \\
&\leq \int_{|t'|>1/T} \dfrac{|\widehat{f}(t')|}{[a^2+(t')^2]}\, C\,\dfrac{(1+\alpha T)^2\,e^{2\,a}}{|\tau|^2}\,dt' + \int_{1/T<|t'|\leq \beta |\tau|T} \dfrac{|\widehat{f}(t')|}{a^2+(t')^2} |K_2(T,\tau,t')|\,dt' + \notag \\
&+ \int_{|t'|\geq \beta |\tau|T } \dfrac{|\widehat{f}(t')|}{a^2+(t')^2}\, |K_2(T,\tau,t')|\,dt' \notag \\
&= J_1+J_2+J_3 .\label{veruno}
\end{align}
We obtained $J_2$ and $J_3$ splitting the set $\{|t'|>1/T\}$ in $\{1/T<|t'|< \beta |\tau| T\}\ne \{\}$ (because $T\geq \sqrt{2}/\sqrt{\beta}$) and $\{|t'|\geq \beta |\tau| T\}$ where $\beta \geq 2$. We estimate $J_1$ so,
\begin{align}
J_1&\leq C\,\dfrac{(1+\alpha T)^2\,e^{2\,a}}{|\tau|^2}\, \int_{|t'|>1/T} \dfrac{|\widehat{f}(t')|}{\langle t' \rangle^{1/2}}\; \dfrac{\langle t' \rangle^{1/2}}{(t')^2}\,dt' \notag \\
&\leq C\,\dfrac{(1+\alpha T)^2\,e^{2\,a}}{|\tau|^2}\, \nor{f}{H^{-1/2}}\Bigl(\int_{|t'|>1/T} \dfrac{1}{(t')^3}\,dt' \Bigr)^{1/2}\notag \\
&\leq C\,\dfrac{(1+\alpha T)^2\,e^{2\,a}\,T}{|\tau|^2}\, \nor{f}{H^{-1/2}}. \label{verdue}
\end{align}
From (\ref{cotakasubdosthree}) we obtain
\begin{equation}
|K_2(T,\tau,t')|\leq \dfrac{C\,T|t'|}{|\tau (Tt'-\tau)|} \leq \dfrac{2^{1-\gamma}C\,T|t'|}{|\tau |\,|\tau|^{\gamma}\,|Tt'|^{1-\gamma}} \quad \text{always that}\;\;|t'|\geq 2\frac{|\tau |}{T} \label{cotakasubdosfour}
\end{equation}
because $|Tt'-\tau|\geq |Tt'|-|\tau | \geq |\tau|$, $|Tt'-\tau|\geq |Tt'|-|\tau | \geq |Tt'|/2$ and so, for $0\leq \gamma \leq 1$
$$|Tt'-\tau|\geq |\tau|^{\gamma}\,\dfrac{|Tt'|^{1-\gamma}}{2^{1-\gamma}}.$$
Note that $1/T< 2|\tau|/T\leq \beta |\tau | T $. So, to estimate the integral $J_3$, since $T\geq \frac{\sqrt{2}}{\sqrt{\beta}}$, then $\frac{2|\tau|}{T}\leq \beta |\tau| T \leq |t'|$, hence
\begin{align}
J_3 &\leq C\,\int_{|t'|\geq \beta |\tau |T} \dfrac{|\widehat{f}(t')|}{\langle t' \rangle^{1/2}}\,\dfrac{\langle t'\rangle^{1/2}\,T\,|t'|}{|\tau |^{1+\gamma}\,T^{1-\gamma}\,|t'|^{3-\gamma}}\,dt' \notag \\
&\leq \dfrac{C\,T^{\gamma}}{|\tau|^{\gamma +1}}\,\nor{f}{H^{-1/2}}\,\Bigl(\int_{|t'|>1/T} \dfrac{1}{|t'|^{3-2\gamma}}\,dt' \Bigr)^{1/2}\notag \\
&\leq \dfrac{C\,T}{|\tau|^{\gamma +1}}\, \nor{f}{H^{-1/2}}.  \label{vertre}
\end{align}
To estimate $J_2$ we are going to use the Schur's lemma \ref{lemadeschur}
\begin{align}
J_2&\leq \int_{1/T<|t'|\leq \beta |\tau|T} \dfrac{|\widehat{f}(t')|}{(t')^2} |K_2(T,\tau,t')|\,dt' \notag \\
&\leq \dfrac{1}{|\tau|^{1/2}}\int_{1/T<|t'|\leq \beta |\tau|T} \dfrac{|\widehat{f}(t')|}{|t'|^{1/2}}\;\dfrac{|\tau|^{1/2}|K_2(T,\tau,t')|}{|t'|^{3/2}}\,dt' ,\label{verquatro}
\end{align}
from $\langle \tau \rangle \backsimeq |\tau|$ and from (\ref{veruno}) we have
\begin{align}
|\widehat{h(t)}(\tau)|&\leq C\,\Bigl(\dfrac{(1+\,\alpha T)^2\,e^{2a}\,T}{\langle\tau \rangle^2}+\dfrac{T}{\langle\tau \rangle^{1+\gamma}}\Bigr)\,\nor{f}{H^{-1/2}} \notag \\
&+ \dfrac{C}{\langle \tau \rangle^{1/2}}\int_{1/T<|t'|\leq \beta |\tau|T} \dfrac{|\widehat{f}(t')|}{\langle t' \rangle^{1/2}}\;\dfrac{|\tau|^{1/2}|K_2(T,\tau,t')|}{|t'|^{3/2}}\,dt'. \label{vercinque}
\end{align}
Now, we consider the integral operator $(L_Tg)(\tau)=\int_{\mathbb{R}}N_T(\tau,t')\,g(t')\,dt'$ where $g(t')=\dfrac{|\widehat{f}(t')|}{\langle t' \rangle^{1/2}}$ and $N_T(\tau,t')=\dfrac{|\tau|^{1/2}}{|t'|^{3/2}}\;|K_2(T,\tau,t')|\;\chi_{\{\frac{1}{T}<|t'|\leq \beta |\tau|T\}}$. So, we obtain from (\ref{vercinque}) that
\begin{align}
|\widehat{h(t)}(\tau)|\leq C\,\Bigl(\dfrac{(1+\,\alpha T)^2\,e^{2a}\,T}{\langle\tau \rangle^2}+\dfrac{T}{\langle\tau \rangle^{1+\gamma}}\Bigr)\,\nor{f}{H^{-1/2}} + \dfrac{C}{\langle \tau \rangle^{1/2}}\,L_Tg(\tau), \label{versei}
\end{align}
We multiply (\ref{versei}) by $\langle \tau \rangle^{1/2}$, take the $L^2$ norm and obtain
\begin{align}
\nor{\langle \tau \rangle^{1/2}\widehat{h(t)}(\tau)}{L^2}&\leq C\,\Bigl(\sqrt{\frac{\beta}{2}}+\,\alpha\Bigr)^2\,T^3\,e^{2a}\,\nor{f}{H^{-1/2}}\,\nor{\dfrac{1}{\langle \tau \rangle^{3/2}}}{L^2}  \notag \\ &+C\,T\,\nor{f}{H^{-1/2}}\,\nor{\dfrac{1}{\langle \tau \rangle^{1/2+\gamma}}}{L^2}+C\, \nor{L_Tg(\tau)}{L^2}. \label{versete}
\end{align}
It is sufficient to prove that the operator $L_T$ is bounded in $L^2$. We need to prove that
\begin{equation*}
\sup_{\tau}\int_{\mathbb{R}}|N_T(\tau,t')|\,dt' \leq C(T) \qquad \text{and} \qquad \sup_{t'}\int_{\mathbb{R}}|N_T(\tau,t')|\,d\tau \leq C(T)
\end{equation*}
to apply the Schur's Lemma \ref{lemadeschur}. We proceed using (\ref{cotakasubdosone})
\begin{align}
\sup_{\tau}\int_{\mathbb{R}}|N_T(\tau,t')|\,dt' &\leq C\, \sup_{\tau} \int_{1/T<|t'|\leq \beta |\tau|T}\dfrac{|\tau|^{1/2}}{|t'|^{3/2}}\;\dfrac{|t'|}{|\tau|}\,dt' \leq C\, \sup_{\tau} \int_{0}^{\beta |\tau|T}\dfrac{|\tau|^{-1/2}}{|t'|^{1/2}}\,dt' \notag \\
&\leq C\, \sup_{\tau} |\tau|^{-1/2}\,T^{1/2}\,(\beta |\tau|)^{1/2} = C\,\beta^{1/2}\,T^{1/2}, \label{supentau}
\end{align}
and using (\ref{cotakasubdostwo})
\begin{align}
\sup_{t'}\int_{\mathbb{R}}|N_T(\tau,t')|\,d\tau  &\leq C\, \sup_{t'} \int_{|\tau|> 1/2}\dfrac{|\tau|^{1/2}}{|t'|^{3/2}}\;\dfrac{|t'|^2}{|\tau|^2}\,d\tau \leq C\,\sup_{t'} \int_{|t'|/\beta T}^{+\infty}\dfrac{|t'|^{1/2}}{|\tau|^{3/2}}\,d\tau \notag \\
&\leq C\, \sup_{t'} \dfrac{|t'|^{1/2}}{1/2}\,\biggl(\frac{|t'|}{\beta T}\biggr)^{-1/2} = C\,\beta^{1/2}\,T^{1/2}. \label{supentlinha}
\end{align}
Hence, $\nor{L_Tg}{L^2}\leq C\, \beta^{1/2}\,T^{1/2}$. Then, from (\ref{versete})
\begin{align}
\nor{\langle \tau \rangle^{1/2}\widehat{h(t)}(\tau)}{L^2}&\leq C\,\Bigl[\Bigl(\sqrt{\frac{\beta}{2}}+\,\alpha\Bigr)^2\,T^3\,e^{2a}+T\Bigr]\,\nor{f}{H^{-1/2}}  + C\,\beta^{1/2}\,T^{1/2}\,\nor{g}{L^2} \notag \\
\nor{h}{H^{1/2}}&\leq C\,\Bigl[\Bigl(\sqrt{\frac{\beta}{2}}+\,\alpha\Bigr)^2\,T^3\,e^{2a}+T+\beta^{1/2}\,T^{1/2}\Bigr]\,\nor{f}{H^{-1/2}} \notag \\
&\leq C\,(\beta +\alpha^2)\,e^{2a}\,T^{1/2}\,\nor{f}{H^{-1/2}} \label{verotto}
\end{align}
when $|\tau|> 1/2$. If $|\tau|\leq 1/2$ we use that $|K(a,T,\tau , t')|\leq C\,(e^{2a}+1)$ which is consequence of (\ref{thanksix}). So,
\begin{align}
|\widehat{h(t)}(\tau)|&\leq C\,(e^{2a}+1)\,\int_{|t'|>1/T}\dfrac{|\widehat{f}(t')|}{a^2+(t')^2}\,dt'=C\,(e^{2a}+1)\,\int_{|t'|>1/T}\dfrac{|\widehat{f}(t')|}{\langle t'\rangle^{1/2}}\;\dfrac{|t'|^{1/2}}{|t'|^2}\,dt' \notag \\
&\leq C\,(e^{2a}+1)\,\nor{f}{H^{-1/2}}\,\Bigl(\int_{|t'|>1/T}\dfrac{dt'}{|t'|^3}\Bigr)^{1/2}=C\,(e^{2a}+1)\,T\,\nor{f}{H^{-1/2}}, \notag \\
\intertext{and,}
\int_{|\tau|\leq 1/2}\langle \tau \rangle\,|\widehat{h}(\tau)|^2\,d\tau &\leq C\,(e^{2a}+1)\,T\,\nor{f}{H^{-1/2}}\,\Bigl(\int_{|\tau|\leq 1/2} \langle \tau \rangle \,d\tau \Bigr)^{1/2}= C\,(e^{2a}+1)\,T\,\nor{f}{H^{-1/2}}. \label{thankseven}
\end{align}
Thus, adding (\ref{verotto}) and (\ref{thankseven}), we have that
\begin{align}
\nor{h}{H^{1/2}} &\leq C\,[(\beta +\alpha^2)\,e^{2a}\,T^{1/2}+(e^{2a}+1)\,T]\,\nor{f}{H^{-1/2}} \notag \\
&\leq C\,[(\beta +\alpha^2)\,e^{2a}+1]\,T^{1/2}\,\nor{f}{H^{-1/2}}. \label{cotadeache}
\end{align}
Finally, from (\ref{thankthree}) and (\ref{cotadeache}),
\begin{align}
\nor{\Psi_TI^>_{a,1}}{H^{1/2}}&=\nor{ah_T}{H^{1/2}}\leq C\,|a|\,(T^{1/2}+1)\, \nor{h}{H^{1/2}} \notag \\
&\leq C\,\alpha \, (T^{1/2}+1) \,[(\beta +\alpha^2)\,e^{2a}+1]\,T^{1/2}\,\nor{f}{H^{-1/2}} \notag \\
&\leq C\,\alpha \,[(\beta +\alpha^2)\,e^{2a}+1]\,T^{1/2}\,\nor{f}{H^{-1/2}}, \label{inespeuno}
\end{align}
and (\ref{thankgod}) is proved in this case. \\ \\
\textbf{Case 2:} $|t'|\leq 1/T$. We proceed like in \cite{CP}, so $\widetilde{\Psi_T}(t)= sgn(t)\Psi_T(t)$ and
\begin{align}
(\Psi_T(t)I^<_{a,1}(t))^{\wedge}(\tau)&= \int_{|t'|\leq 1/T}p_a(t')\widehat{f}(t')\{(\widetilde{\Psi_T}(t)e^{a|t|})^{\wedge}(\tau)-(\widetilde{\Psi_T}(t)e^{a|t|})^{\wedge}(\tau -t')\}\,dt' + \notag \\
& \quad + \int_{|t'|\leq 1/T}p_a(t')\widehat{f}(t')(\widetilde{\Psi_T}(t)[e^{a|t|}-1])^{\wedge}(\tau-t')\,dt' \notag \\
&:= I_{a,11}(\tau) + I_{a,12}(\tau). \label{dosuno}
\end{align}
We can estimate the integral $I_{a,11}$ with the ideas used to prove the Lemma $2.1$ in \cite{GTV}.
\begin{align}
I_{a,11}&=\int_{|t'|\leq 1/T}p_a(t')\,\widehat{f}(t')\int_{\tau -t'}^{\tau}\frac{d}{du}(\widetilde{\Psi_T}(t)e^{a|t|})^{\wedge}(u)\,du \;dt' \notag \\
&=\int_{|t'|\leq 1/T}\dfrac{a\,t'}{a^2+(t')^2}\,\widehat{f}(t')\int_0^1\frac{d}{d\lambda}(\widetilde{\Psi_T}(t)e^{a|t|})^{\wedge}(\tau -\lambda t')\,d\lambda \;dt'. \label{aruno}
\end{align}
We multiply (\ref{aruno}) by $\langle \tau \rangle^{1/2}\leq C(\langle t' \rangle^{1/2}+|\tau -\lambda t'|^{1/2})$, take the $L^2$ norm and obtain
\begin{align}
\nor{I_{a,11}}{H^{1/2}}&\leq C \int_{|t'|\leq 1/T}\dfrac{|a|\,|t'|\,\langle t' \rangle^{1/2}\,|\widehat{f}(t')|}{a^2+(t')^2}\,dt'\;\nor{\dfrac{d}{dt}(\widetilde{\Psi_T}e^{a|\cdot|})^{\wedge}}{L_{\tau}^2}+ \notag \\
&\quad +C\int_{|t'|\leq 1/T}\dfrac{|a|\,|t'|\,|\widehat{f}(t')|}{a^2+(t')^2}\,dt'\;\nor{|\tau|^{1/2}\dfrac{d}{dt}(\widetilde{\Psi_T}e^{a|\cdot|})^{\wedge}}{L_{\tau}^2} \notag \\
&\leq C \int_{|t'|\leq 1/T}\dfrac{|\widehat{f}(t')|}{\langle t' \rangle^{1/2}}\,\dfrac{|a|\,|t'|\,\langle t' \rangle}{a^2+(t')^2}\,dt'\;\nor{(|t|\,\Psi_T(t)e^{a|t|})^{\wedge}(\tau)}{L^2_{\tau}}+ \notag \\
&\quad +C\int_{|t'|\leq 1/T}\dfrac{|\widehat{f}(t')|}{\langle t' \rangle^{1/2}}\,\dfrac{|a|\,|t'|\,\langle t' \rangle^{1/2}}{a^2+(t')^2}\,dt'\;\nor{|\tau |^{1/2}(|t|\,\Psi_T(t)e^{a|t|})^{\wedge}(\tau)}{L^2_{\tau}} \notag \\
&\leq C\,T^{3/2}\,\nor{f}{H^{-1/2}}\Bigl( \int_{|t'|\leq 1/T}\dfrac{|a|^2\,|t'|^2\,\langle t' \rangle^2}{[a^2+(t')^2]^2}\,dt'\Bigr)^{1/2}+\notag \\
&\quad +C\,T\,\nor{f}{H^{-1/2}}\Bigl(\int_{|t'|\leq 1/T}\dfrac{|a|^2\,|t'|^2\,\langle t' \rangle}{[a^2+(t')^2]^2}\,dt'\Bigr)^{1/2} \label{ardos} \\
&\leq C\,\alpha \,T\,\nor{f}{H^{-1/2}}+C\,\alpha \,T^{1/2}\,\nor{f}{H^{-1/2}} \label{artres} \\
&\leq C\,\alpha \,T^{1/2}\,\nor{f}{H^{-1/2}}. \label{arcuatro}
\end{align}
We obtained (\ref{ardos}) thanks to Cauchy-Schwartz's inequality and (\ref{chavenew}). (\ref{artres}) is consequence from
$$|a|^2|t'|^4\leq \alpha^2\,|t'|^4 \leq \alpha^2\,[a^2+(t')^2]^2$$
and this implies that the root square of the integrals in (\ref{ardos}) are bounded by $\sqrt{2}\,\alpha \,T^{-1/2}$.
\\ \\
The estimate of the integral $I_{a,12}$ in \cite{CP} is not adequate but a small modification is sufficient to obtain a good result. From Case 2 in the proof of the Lemma 2.4 in \cite{CP} we know that
\begin{align}
|(\widetilde{\Psi_T}(t)[e^{a|t|}-1])^{\wedge}(\tau-t')|\leq C\,T^2\;\dfrac{|a|}{(1+|\tau|T)^2}. \label{dosdos}
\end{align}
So, using Cauchy-Schwartz's inequality
\begin{align}
|I_{a,12}(\tau)|&\leq C \int_{|t'|\leq 1/T}\dfrac{|a|^2}{a^2+(t')^2}\,|\widehat{f}(t')|\;\dfrac{T^2}{(1+|\tau|T)^2}\,dt' \notag \\
&= \dfrac{CT^2}{(1+|\tau|T)^2} \int_{|t'|\leq 1/T}\dfrac{|\widehat{f}(t')|}{\langle t' \rangle^{1/2}}\;\dfrac{|a|^2\langle t' \rangle^{1/2}}{a^2+(t')^2}\,dt' \notag \\
&\leq \dfrac{CT^2}{(1+|\tau|T)^2} \Bigl(\int_{|t'|\leq 1/T}\dfrac{|a|^4\langle t' \rangle}{[a^2+(t')^2]^2}\,dt'\Bigr)^{1/2}\nor{f}{H^{-1/2}}. \label{dostres}
\end{align}
Taking square, multiplying by $\langle \tau \rangle$ and integrating on $\mathbb{R}$ in (\ref{dostres}) we obtain
\begin{align}
\int_{\mathbb{R}}\langle \tau \rangle|I_{a,12}(\tau)|^2\,d\tau &\leq CT^4\Bigl\{\int_{\mathbb{R}}\dfrac{(1+|\tau|)}{(1+|\tau|T)^4}\,d\tau \Bigr\}\Bigl\{\int_{|t'|\leq 1/T}\dfrac{|a|^4(1+|t'|)}{[|a|+|t'|]^4}\,dt'\Bigr\}\nora{f}{H^{-1/2}}{2} \notag \\
&\leq C\,(\alpha +1)\,T\,\nora{f}{H^{-1/2}}{2} \label{doscuatro}
\end{align}
because
\begin{align}
\int_{\mathbb{R}}\dfrac{(1+|\tau|)}{(1+|\tau|T)^4}\,d\tau &\leq  C\,\Bigl(\dfrac{1}{T}+\dfrac{1}{T^2}\Bigr) \notag \\
\intertext{and}
\int_{|t'|\leq 1/T}\dfrac{|a|^4(1+|t'|)}{[|a|+|t'|]^4}\,dt' &\leq \dfrac{1+|a|}{T}\leq \dfrac{\alpha +1}{T}. \notag
\end{align}
Hence, from (\ref{doscuatro}),
\begin{align}
\nor{I_{a,12}}{H^{1/2}}&\leq C\,\sqrt{\alpha +1}\,T^{1/2}\,\nor{f}{H^{-1/2}}. \label{doscinco} \\
\intertext{We conclude from (\ref{dosuno}), (\ref{arcuatro}) and (\ref{doscinco}) that}
\nor{\Psi_TI^<_{a,1}}{H^{1/2}}&\leq C\,(\alpha +\sqrt{\alpha +1}\,)\,T^{1/2}\,\nor{f}{H^{-1/2}}. \label{inespedos}
\end{align}
Note that, from (\ref{inespeuno}) and (\ref{inespedos}), we have actually proved that
\begin{equation}
\nor{\Psi_TI_{a,1}}{H^{1/2}}\leq C\,\{\,\alpha\,[(\beta +\alpha^2)\,e^{2a}+1]+ \sqrt{\alpha +1}\}\,T^{1/2}\,\nor{f}{H^{-1/2}}, \label{inespetres}
\end{equation}
which gives (\ref{thankgod}).\\ \\
\textbf{Estimate for $I_{a,2}$.} The estimate for $I_{a,2}$ is similar to that of $I_{a,1}$, exchanging $p_a$ by $q_a$ and $\widetilde{\Psi_T}$ by $\Psi_T$. So, we omit its calculation.
\end{proof}
\begin{corol}\label{salva}
Let $\frac{\sqrt{2}}{\sqrt{\beta}}\leq T\leq 1$, $\beta\geq 2$, $\alpha\geq 1$ and $-\alpha\leq a\leq 0$.
\begin{equation}
\nor{\Psi_TI_a}{H^{1/2}} \leq C\,\alpha\,(\beta+\alpha^2)\,T^{1/2}\,\nor{f}{H^{-1/2}}. \label{salvador}
\end{equation}
\end{corol}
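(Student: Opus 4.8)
The plan is to derive Corollary~\ref{salva} directly from Lemma~\ref{keylema}, exploiting the extra hypothesis $a\le 0$ to replace every occurrence of the exponential weight by $1$.

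First I would observe that the hypotheses of Corollary~\ref{salva} are a special case of those of Lemma~\ref{keylema}: indeed $-\alpha\le a\le 0$ implies $|a|\le\alpha$, and the ranges $\frac{\sqrt{2}}{\sqrt{\beta}}\le T\le 1$, $\beta\ge 2$, $\alpha\ge 1$ coincide. Hence the conclusion (\ref{thankgod}) already applies. What I actually want to use, however, is the sharper $a$-dependent form that its proof produces: inspection of that proof shows the exponential enters only through the kernel bound (\ref{cotakasubuno}) and its consequences (\ref{verdue}), (\ref{verotto}), (\ref{cotadeache}), (\ref{thankseven}) and ultimately (\ref{inespetres}), always as the factor $e^{2a}$ multiplying a nonnegative quantity, so that (\ref{inespetres}) reads
$$\nor{\Psi_TI_{a,1}}{H^{1/2}}\le C\bigl\{\alpha\bigl[(\beta+\alpha^2)e^{2a}+1\bigr]+\sqrt{\alpha+1}\bigr\}\,T^{1/2}\,\nor{f}{H^{-1/2}}.$$

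Next I would use that $a\le 0$ forces $e^{2a}\le 1$, so the right-hand side above is bounded by $C\{\alpha(\beta+\alpha^2+1)+\sqrt{\alpha+1}\}\,T^{1/2}\,\nor{f}{H^{-1/2}}$. Finally, the elementary inequalities $\beta+\alpha^2+1\le C(\beta+\alpha^2)$ and $\sqrt{\alpha+1}\le\sqrt{2}\,\alpha\le C\,\alpha\le C\,\alpha(\beta+\alpha^2)$, valid for $\beta\ge 2$, $\alpha\ge 1$, collapse this to
$$\nor{\Psi_TI_{a,1}}{H^{1/2}}\le C\,\alpha\,(\beta+\alpha^2)\,T^{1/2}\,\nor{f}{H^{-1/2}}.$$
The term $I_{a,2}$ is treated by the same argument, exchanging $p_a$, $\widetilde{\Psi_T}$ for $q_a$, $\Psi_T$ as indicated at the end of the proof of Lemma~\ref{keylema}, and yields the identical bound. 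Adding the two contributions in the decomposition (\ref{thankone}) of $I_a$ gives (\ref{salvador}).

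As for obstacles, there is no substantive one here: this is a bookkeeping corollary. The only point requiring a moment's care is to confirm that the proof of Lemma~\ref{keylema} uses the sign of $a$ nowhere except through $|a|\le\alpha$ and the crude estimate $e^{2a}\le e^{2\alpha}$; once that is checked, replacing $e^{2a}\le e^{2\alpha}$ by the sharper $e^{2a}\le 1$ (valid precisely because $a\le 0$) is all that is needed, and the remaining steps are the elementary inequalities among $\alpha$, $\beta$, $T$ recorded above.
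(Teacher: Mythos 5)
Your proposal is correct and matches the paper's own argument: the authors likewise deduce (\ref{salvador}) directly from (\ref{inespetres}) by using $e^{2a}\le 1$ for $a\le 0$ and absorbing the remaining constants via $\alpha\ge 1$, $\beta\ge 2$ (with the $I_{a,2}$ contribution handled identically). The only difference is that you spell out the bookkeeping that the paper leaves implicit.
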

\begin{proof}[Proof]
(\ref{salvador}) is a direct consequence of (\ref{inespetres}).
\end{proof}

\begin{lema}\label{chave6}
Let $\frac{\sqrt{2}}{\sqrt{\beta}}\leq T \leq \frac{1}{2}$ and $\beta \geq 8$.
Then,
\begin{align}
\nor{\Psi_T(\cdot)\,I_a(\cdot)}{H_t^{1/2}} &\leq
C\,\alpha\,(\beta +\alpha^2)\,T^{1/2}\,\nor{f}{H^{-1/2}}, \quad \text{if} \quad a<-\alpha,
\label{thankgodone} \\
\intertext{and,}
\nor{\Psi_T(\cdot)\,I_a(\cdot)}{H_t^{1/2}} &\leq
C\,\alpha\,(\beta+\alpha^2)\,e^{2\alpha}\,T^{1/2}\,\nor{f}{H^{-1/2}}, \quad \text{if} \quad a\leq \alpha.
\label{sothankgodone}
\end{align}
\end{lema}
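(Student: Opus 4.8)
The plan is to deduce both inequalities of Lemma~\ref{chave6} from the estimates already in hand — Lemma~\ref{keylema}, Corollary~\ref{salva}, and Proposition~\ref{fermin} — the only genuine point being the ``over‑dissipative'' regime $a<-\alpha$, for which Lemma~\ref{keylema} is not available (its hypothesis $|a|\le\alpha$ fails, and a naive use would produce an unbounded factor $e^{2|a|}$).

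\emph{Step 1: proof of \eqref{thankgodone}, i.e.\ the case $a<-\alpha$.} I would split the exponent as $a=\alpha_1+\alpha_2$ with $\alpha_2:=-\alpha$ and $\alpha_1:=a+\alpha$. Since $\alpha\ge1$ and $a<-\alpha$, both $\alpha_1$ and $\alpha_2$ are strictly negative, so Proposition~\ref{fermin} applies with $b=1/2$ and gives
$$\nor{\Psi_T(\cdot)\,I_a(\cdot)}{H_t^{1/2}}\;\le\; C\,(1+T)\,\nor{\Psi_{2T}(\cdot)\,I_{-\alpha}(\cdot)}{H_t^{1/2}}.$$
Now $-\alpha\in[-\alpha,0]$, $\beta\ge8\ge2$, $\alpha\ge1$, and $2T\in\bigl[\tfrac{\sqrt2}{\sqrt\beta},1\bigr]$ because $T\in\bigl[\tfrac{\sqrt2}{\sqrt\beta},\tfrac12\bigr]$; hence Corollary~\ref{salva}, applied with $2T$ in place of $T$, yields
$$\nor{\Psi_{2T}(\cdot)\,I_{-\alpha}(\cdot)}{H_t^{1/2}}\;\le\; C\,\alpha\,(\beta+\alpha^2)\,(2T)^{1/2}\,\nor{f}{H^{-1/2}}.$$
Combining the two displays and absorbing $(1+T)(2T)^{1/2}\le 2\sqrt2\,T^{1/2}$ into the constant (legitimate since $0<T\le\tfrac12$) gives exactly \eqref{thankgodone}.

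\emph{Step 2: proof of \eqref{sothankgodone}, i.e.\ the case $a\le\alpha$.} Here I would distinguish two subcases. If $|a|\le\alpha$, then Lemma~\ref{keylema} applies directly: its hypotheses $\tfrac{\sqrt2}{\sqrt\beta}\le T\le1$, $\beta\ge2$, $\alpha\ge1$, $|a|\le\alpha$ all hold because $T\le\tfrac12\le1$ and $\beta\ge8$, and it delivers precisely \eqref{sothankgodone}. If instead $a<-\alpha$, then Step~1 furnishes the stronger bound $C\,\alpha\,(\beta+\alpha^2)\,T^{1/2}\,\nor{f}{H^{-1/2}}$, which is $\le C\,\alpha\,(\beta+\alpha^2)\,e^{2\alpha}\,T^{1/2}\,\nor{f}{H^{-1/2}}$ since $e^{2\alpha}\ge1$. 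In either subcase \eqref{sothankgodone} follows.

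\emph{Main obstacle.} There is no new quantitative estimate to establish; the content is the decomposition $a=\alpha_1+\alpha_2$ into two negative parts, which lets Proposition~\ref{fermin} ``trade'' the excess dissipation below $-\alpha$ for a harmless factor $1+T$ and reduce matters to Corollary~\ref{salva} — crucially the version \emph{without} the exponential factor, which is what makes the bound \eqref{thankgodone} uniform over all $a<-\alpha$. The point requiring care is the bookkeeping of parameter ranges: one must check that replacing $T$ by $2T$ keeps us inside the admissible interval $\bigl[\tfrac{\sqrt2}{\sqrt\beta},1\bigr]$ (this is exactly why the hypothesis $T\le\tfrac12$, together with $\beta\ge8$ so that $\bigl[\tfrac{\sqrt2}{\sqrt\beta},\tfrac12\bigr]\ne\varnothing$, is imposed), and one must resist invoking Lemma~\ref{keylema} directly in the regime $a<-\alpha$.
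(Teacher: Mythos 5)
Your proposal is correct and follows exactly the paper's route: the paper proves \eqref{thankgodone} by combining Proposition~\ref{fermin} (with the decomposition into two negative exponents) and Corollary~\ref{salva} applied at scale $2T$, and then obtains \eqref{sothankgodone} from \eqref{thankgodone} together with Lemma~\ref{keylema}. You have merely made explicit the parameter bookkeeping ($2T\le 1$ from $T\le\tfrac12$, absorption of $(1+T)(2T)^{1/2}$ into the constant) that the paper leaves implicit.
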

\begin{proof}[Proof]
(\ref{chave7}) and (\ref{salvador}) give (\ref{thankgodone}), and, (\ref{sothankgodone}) is consequence of (\ref{thankgodone}) and (\ref{thankgod}).
\end{proof}


\begin{proof}[Proof of Proposition \ref{forcingterm}]
From the definition of the $\mathcal{Y}_{s,b}$ norm, we have
\begin{align}
&\nora{\Psi_T(t)\int_0^tV_{\lambda}(t-t')F(t')\,dt'}{\mathcal{Y}_{s,1/2}}{2} \notag \\
&=\sum_{k\neq 0}\langle k\rangle^{2s} \int_{-\infty}^{\infty} (1+|\tau|) \Bigl|\Bigl(e^{-itk^3}\Psi_T(t)\int_0^te^{ik^3(t-t')+\eta|t-t'|\Phi(k)}\widehat{F}(k,t')\,dt'\Bigr)^{\wedge}(\tau)\Bigr|^2\,d\tau \notag \\
&=\nora{\langle k \rangle^s \nor{\Psi_T(t)\int_0^t e^{\eta|t-t'|\Phi(k)}\,[e^{-ik^3t'}\widehat{F}(k,t')]\,dt'}{H^{1/2}_t}}{l^2_k}{2} \notag \\
&\leq \nora{\langle k\rangle^s\,C\,\eta\,\alpha\,(\beta+(\eta \,\alpha)^2)\,e^{2\alpha\,\eta}\, T^{1/2}\, \nor{ e^{-ik^3t}\widehat{F}(k,t)}{H^{-1/2}_t}}{l^2_k}{2} \label{aplicalema} \\
&= C^2\,(\eta\,\alpha)^2\,(\beta+(\eta\,\alpha)^2)^2\,e^{4\eta\,\alpha}\, T\, \nora{\langle k\rangle^s \langle\tau -k^3\rangle^{-1/2}\widehat{F}(k,\tau)}{l_k^2L_{\tau}^2}{2} .\label{cotapartenolin}
\end{align}
In the inequality (\ref{aplicalema}) we apply the Lemma \ref{chave6}. (\ref{cotapartenolin}) implies (\ref{cotapartenaolineal}).
\end{proof}


\setcounter{equation}{0}
\section{\textsc{Local Well-posedness in $H^s(\mathbb{T})$}}

Consider the $\lambda$-periodic initial value problem (\ref{fivp}) with periodic initial data $u_0\in H^s_{\lambda}$, $s\geq -1/2$. We show first that, for arbitrary $\lambda$, this problem is well-posed on a time interval of size $\sim 1$ provided $\nor{u_0}{H^{-1/2}_{\lambda}}$ is sufficiently small. Then we show by a rescaling argument that (\ref{fivp}) is locally well-posed for arbitrary initial data $u_0\in H^s_{\lambda}$. As mentioned before in Remark \ref{meanzero}, we restrict our attention to initial data having zero $x$-mean.

\begin{proof}[Proof of the Theorem \ref{maintheorem}] Fix $u_0\in H^s_{\lambda}$, $s\geq -1/2$ and for $w\in Z^{-1/2}$ define
\begin{align}
(\mathcal{A}w)(t)=\Psi(t)V_{\lambda }(t)u_0 - \Psi(t)\int_0^tV_{\lambda }(t-t')\,(\Psi(t')\,w(t'))\,dt' . \label{contraction}
\end{align}
The bilinear estimate (\ref{bilestckstt}) shows that $u\in Y^{-1/2}$ implies $\Psi(t)\,\partial_x(u^2)\in Z^{-1/2}$ so the (nonlinear) operator
$$\Gamma(u)=\mathcal{A}\Bigl(\frac{1}{2}\,\partial_x(u^2)\Bigr)$$
is defined on $Y^{-1/2}$. Observe that $\Gamma(u)=u$ is equivalent, at least for $t\in [-1,1]$, to (\ref{integralequation}), which is equivalent to (\ref{fivp}).\\ \\
{\bf Claim 1.} $\Gamma$ : (bounded subsets of $Y^{-1/2}$)$\longmapsto$ (bounded subsets of $Y^{-1/2}$).\\
Since
$$\Gamma(u)=\Psi(t)V_{\lambda }(t)u_0 - \Psi(t)\int_0^tV_{\lambda }(t-t')\,\Bigl(\frac{\Psi(t')}{2}\,\partial_xu^2(t')\Bigr)\,dt',$$
we estimate using (\ref{linestuno}), (\ref{linestdos}) and the bilinear estimate (\ref{bilestckstt}):
\begin{align}
\nor{\Gamma(u)}{Y^{-1/2}}&\leq \nor{\Psi(t)\,V_{\lambda }(t)u_0}{Y^{-1/2}}+\nor{\Psi(t)\int_0^tV_{\lambda }(t-t')\,(\frac{\Psi(t')}{2}\,\partial_xu^2(t'))\,dt'}{Y^{-1/2}} \notag \\
&\leq C_1\,\nor{u_0}{H^s_{\lambda}} + C_2\,\nor{\Psi(t)\,\partial_xu^2}{Z^{-1/2}} \notag \\
&\leq C_1\,\nor{u_0}{H^s_{\lambda}} + C_2\,C_3\,\lambda^{0+}\,\nora{u}{Y^{-1/2}}{2} \label{claimuno}
\end{align}
and the claim is proved. \\ \\
Now, we consider the ball
$$\mathfrak{B}=\{w\in Y^{-1/2}: \nor{w}{Y^{-1/2}}\leq C_4\,\nor{u_0}{H^{-1/2}_{\lambda}}\}.$$
{\bf Claim 2.} $\Gamma$ is a contraction on $\mathfrak{B}$ if $\nor{u_0}{H^{-1/2}_{\lambda}}$ is sufficiently small. \\
We wish to prove that for some $\theta \in (0,1)$,
$$\nor{\Gamma(u)-\Gamma(v)}{Y^{-1/2}}\leq \theta \, \nor{u-v}{Y^{-1/2}}$$
for all $u$, $v \in \mathfrak{B}$. Since $u^2-v^2=(u+v)(u-v)$, we can see that
\begin{align}
\nor{\Gamma(u)-\Gamma(v)}{Y^{-1/2}}&\leq \nor{-\Psi(t)\int_0^tV_{\lambda }(t-t')\,\frac{\Psi(t')}{2}\,\partial_x(u^2-v^2)(t')\,dt'}{Y^{-1/2}} \notag \\
&\leq C_2\,\nor{\Psi(t)\,\partial_x(u+v)(u-v)}{Z^{-1/2}} \notag \\
&\leq C_2\,C_3\,\lambda^{0+}\,(\nor{u}{Y^{-1/2}}+\nor{u}{Y^{-1/2}})\,\nor{u-v}{Y^{-1/2}} \notag \\
&\leq 2\,C_2\,C_3\,C_4\,\lambda^{0+}\,\nor{u_0}{H^{-1/2}_{\lambda}}\,\nor{u-v}{Y^{-1/2}}. \label{claimdos}
\end{align}
(\ref{claimdos}) holds because $u$, $v \in \mathfrak{B}$. Hence, for fixed $\lambda$, if we take $\nor{u_0}{H^{-1/2}_{\lambda}}$ so small such that
\begin{equation}
2\,C_2\,C_3\,C_4\,\lambda^{0+}\,\nor{u_0}{H^{-1/2}_{\lambda}}\ll 1  \label{smallcondition}
\end{equation}
the contraction estimate is verified.\\ \\
The preceding discussion establishes well-posedness of (\ref{fivp}) on a $O(1)$-sized time interval for any initial data satisfying (\ref{smallcondition}). To prove that our result holds for every given data $u_0$ in $H^s_{\lambda}$ and not only for small data as (\ref{smallcondition}), let us perform the following scale change
\begin{align}
v(x,t)&=\frac{1}{\sigma^2}u\Bigl(\frac{x}{\sigma},\frac{t}{\sigma^3}\Bigr)\label{changescale} \\
\intertext{where $\sigma\geq \alpha$. So, $v$ is periodic with respect to the $x$ variable with period $\sigma \lambda$, hence for $k\in \mathbb{Z}/\sigma \lambda$}
\widehat{v}(k)&=\frac{1}{\sigma^2}\,\int_0^{\sigma \lambda}e^{-2\pi ikx}\,u(x/\sigma)\,dx=\frac{1}{\sigma}\,\widehat{u}(k\sigma),
\end{align}
and $v$ satisfies the equation
\begin{align*}
\sigma^3v_t(x,t)+\sigma^3v_{xxx}(x,t)+\sigma^3vv_x(x,t)+\eta Sv(x,t)=0,
\end{align*}
where the operator $S$ is defined by $(Sv)^{\wedge}(k):=-\Phi(\sigma k) \hat{v}(k)$ and so,
$ Sv(x,t)=\frac{1}{\sigma^2} Lu\Bigl(\dfrac{x}{\sigma} , \dfrac{t}{ \sigma^3}\Bigr)$.
Hence, $v_t + v_{xxx} + vv_x + \eta \frac{1}{\sigma^3} Sv = 0$. Considering $\widetilde{S}=\dfrac{1}{\sigma^3}S$, $v$ satisfies
\begin{equation}\label{newfivp}
\left\{
\begin{aligned}
v_t+v_{xxx}+vv_x+ \eta \widetilde{S} v &= 0 \qquad x\in [0,\sigma \,\lambda], \;\; t\in (0,+\infty) \\
v(x,0)&=v_0(x)= \dfrac{1}{\sigma^2}\,u_0\Bigl(\dfrac{x}{\sigma}\Bigr),
\end{aligned}
\right.
\end{equation}
where
$$(\widetilde{S}v)^{\wedge}(k)=\dfrac{1}{\sigma^3}(S v)^{\wedge}(k)= -\dfrac{1}{\sigma^3}\,\Phi(\sigma k)\hat{v}(k)\quad \text{and}\quad \dfrac{1}{\sigma^3}\,\Phi(\sigma k)\leq \dfrac{\alpha}{\sigma^3}\leq 1.$$
Finally, consider (\ref{fivp}) with $\lambda=\lambda_0$ fixed and $u_0\in H^s_{\lambda_0}$, $s\geq -1/2$. This problem is well-posed on a small time interval $[0,\delta]$ if and only if the $\sigma$-rescaled problem (\ref{newfivp}) is well-posed on $[0,\sigma^3\,\delta]$. A calculation shows that
$$\nor{v_0}{H^{-1/2}_{\sigma\,\lambda_0}}\leq \frac{1}{\sigma}\,\nor{u_0}{H^{-1/2}_{\lambda_0}}.$$
Observe that
$$(\sigma\,\lambda_0)^{0+}\,\nor{v_0}{H^{-1/2}_{\sigma\,\lambda_0}}\leq \frac{(\sigma\,\lambda_0)^{0+}}{\sigma}\,\nor{u_0}{H^{-1/2}_{\lambda_0}}\ll 1,$$
provided $\sigma=\sigma(\lambda_0, \nor{u_0}{H^{-1/2}_{\lambda_0}})$ is taken to be sufficiently large. This verifies (\ref{smallcondition}) for the problem (\ref{newfivp}) proving well-posedness of (\ref{newfivp}) on the time interval, say $[0,1]$. Hence, (\ref{fivp}) is locally well-posed for $t\in [0,\sigma^{-3}]$.
\end{proof}


\renewcommand{\refname}{\sc References}


\end{document}